\numberwithin{equation}{section}
\def\R{\mathbb R}
\def\N{\mathbb N}
\def\C{\mathbb C}
\def\Z{\mathbb Z}
\def\Q{\mathbb Q}
\def\T{\mathbb T}
\def\OO{\mathcal O}
\def\O{\mathcal O}
\def\CC{\mathcal C}
\def\={\equiv}
\def\<{\langle}
\def\>{\rangle}
\def\Cl{\operatorname {\CC}\!\ell}
\def\clk{\Cl_K}
\def\rank{\operatorname{rank}}
\def\ov{\overline}
\def\inv{^{-1}}
\def\ox{\ok^\times}
\def\nx{\N^\times}
\def\ok{\OO_{\! K}}
\def\oxox{\ok\rtimes \ok^\times}
\def\uk{\ok^*}
\def\uf{\OO_{\!F}^*}
\def\hatok{\hat{\OO}_{\! K}}
\def\lsp{\operatorname{span}}
\def\id{\operatorname {id}}
\def\hatok{\widehat{\OO}_{\! K}}
\def\lsp{\operatorname{span}}
\def\Ind{\operatorname{Ind}}
\def\sld{SL_d}
\newcommand{\restr}[1]{\! \!\upharpoonright _{#1}}
\newtheorem{theorem}{Theorem}[section]
\newtheorem{proposition}[theorem]{Proposition}
\newtheorem{corollary}[theorem]{Corollary}
\newtheorem{lemma}[theorem]{Lemma}
\theoremstyle{remark}
\newtheorem{remark}[theorem]{Remark}
\newtheorem{definition}[theorem]{Definition}
\newcommand{\thmref}[1]{Theorem~\ref{#1}}
\newcommand{\proref}[1]{Proposition~\ref{#1}}
\newcommand{\lemref}[1]{Lemma~\ref{#1}}
\def\acts{\mathrel{\reflectbox{$\righttoleftarrow$}}}
\title[Phase transitions of C*-algebras and Furstenberg conjecture]{Phase transitions on C*-algebras 
arising from number fields and the generalized Furstenberg conjecture}
\author{Marcelo Laca} 
\address[M. Laca]{Department of Mathematics and Statistics, University of Victoria, Canada}
\author{Jacqueline M. Warren}
\address[J. M. Warren]{Department of Mathematics, University of California, San Diego}
\date{January 23, 2018}
\begin{document}
\begin{abstract}
In recent work,  Cuntz, Deninger and Laca have studied the Toeplitz type
C*-algebra associated to the affine monoid of algebraic integers in a number field, under a time evolution determined by the absolute norm.
The KMS equilibrium states of their system are parametrized by traces on the C*-algebras 
of the semidirect products $J_\gamma \rtimes \uk$ resulting from the multiplicative action of the
units $\uk$ on integral ideals $J_\gamma$ representing each ideal class $\gamma \in \clk$. At each fixed inverse temperature $\beta >2$,  the extremal equilibrium states
correspond to extremal traces of $C^*(J_\gamma \rtimes \uk)$. 
Here we undertake the study of these traces using the transposed action of $\uk$ on the duals $\hat{J}_\gamma$ of the ideals and the recent characterization of traces on transformation group C*-algebras due to Neshveyev.
 We show that the extremal traces of $C^*(J_\gamma \rtimes \uk)$ are parametrized by pairs consisting of 
an ergodic invariant measure for the action of $\uk$ on $\hat{J}_\gamma$ together with a
 character of the isotropy subgroup associated to the support of this measure.
For every class $\gamma$, the dual group $\hat{J}_\gamma$ is a $d$-torus on which
$\uk$ acts by linear toral automorphisms. Hence, the problem
of classifying all extremal traces is a generalized version of Furstenberg's
celebrated $\times_2$ $\times_3$ conjecture.
 We classify the results for various number fields in terms of ideal class group, degree, and unit rank, and we point along the way the trivial, the intractable, and the conjecturally classifiable cases.
At the topological level, it is possible to characterize the number fields for which infinite $\uk$-invariant sets are dense 
in $\hat{J}_\gamma$, thanks to a theorem of Berend; as an application we give a description of the primitive ideal space of $C^*(J_\gamma \rtimes \uk)$ for those number fields. 
\end{abstract}

\maketitle

\section{Introduction}
 Let $K$ be an algebraic number field and let $\ok$ denote its ring of integers.
 The associated multiplicative monoid $\ox := \ok \setminus \{0\}$ of nonzero integers  acts by injective endomorphisms on the additive group of $\ok$ and gives rise to the semi-direct product $\oxox$, the affine monoid (or `$b+ax$ monoid') of algebraic integers in $K$. 
 
Let $\{\xi_{(x,w)}: (x,w) \in \oxox\}$  be the standard orthonormal basis of the Hilbert space $\ell^2(\oxox)$. The left regular representation $L$ of $ \oxox$ by isometries on $\ell^2(\oxox)$ is determined by $L_{(b,a)} \xi_{(x,w)} = \xi_{(b+ax,aw)}$. In \cite{CDL}, Cuntz, Deninger and Laca studied the Toeplitz-like  C*-algebra 
$\mathfrak{T} [\ok] := C^*(L_{(b,a)}: (b,a) \in \oxox)$ generated by this representation
and analyzed the equilibrium states of the natural time evolution $\sigma$ on $\mathfrak{T} [\ok]$ determined by the absolute norm $N_a := | \ox/(a)|$ via
\[
\sigma _t (L_{(b,a)}) = N_a^{it} L_{(b,a)} \qquad a\in \ox, \ \  t\in \R.
\]

One of the main results of \cite{CDL} is  a characterization of the simplex of KMS equilibrium states of this dynamical system at each inverse temperature $\beta \in (0,\infty]$. 
Here we will be interested in the low-temperature range of that classification. To describe the result briefly, let $\uk$ be the group of units, that is, the elements of $\ox$ whose inverses are also integers, and recall that by a celebrated theorem of Dirichlet, $\uk \cong W_K \times \Z^{r+s-1}$, where $W_K$ (the group of roots of unity in $\uk$) is finite, $r$ is the number of real embeddings of $K$, and $s$ is equal to half the number of complex embeddings of $K$. 
Let $\clk $ be the ideal class group of $K$, which, by definition, is the quotient of the group of all fractional ideals in $K$ modulo the principal ones, and is a finite abelian group. For each ideal class $\gamma \in \clk$ let $J_\gamma \in \gamma$ be an integral ideal representing $\gamma$. 
 By  \cite[Theorem 7.3]{CDL}, for each $\beta > 2$ the KMS$_\beta$ states of $C^*(\oxox)$ are parametrized by
 the tracial states of the  direct sum of group C*-algebras $\bigoplus_{\gamma\in\clk} C^*(J_\gamma\rtimes \uk)$,  where the units act by multiplication on each ideal viewed as an additive group. 
It is intriguing that exactly the same direct sum of group C*-algebras also plays a role in the computation of the $K$-groups of the semigroup C*-algebras of algebraic integers in the work of Cuntz, Echterhoff and Li, see e.g. \cite[Theorem 8.2.1]{CEL}. Considering as well that the group of units and the ideals representing different ideal classes are a measure of the failure of unique factorization into primes in $\ok$, we feel it is of interest to investigate the tracial states of the
 C*-algebras $C^*(J_\gamma\rtimes \uk)$ that arise as a natural parametrization of KMS equilibrium states of $C^*(\oxox)$. 

This work is organized as follows. In Section \ref{FromKMS} we review the phase transition from \cite{CDL} and apply a theorem of Neshveyev's  to show in \thmref{thm:nesh} that the extremal KMS states arise from ergodic invariant probability measures and characters of their isotropy subgroups for the actions $\uk \acts \hat J_\gamma$ of units on the duals of integral ideals. 

We begin Section \ref{unitaction} by showing that for imaginary quadratic fields, the orbit space of the action of units is a compact Hausdorff space that parametrizes the ergodic invariant probability measures. All other number fields have infinite groups of units leading to `bad quotients' for which noncommutative geometry provides convenient tools of analysis.  
Units act by toral automorphisms and so the classification of equilibrium states is intrinsically related to the higher-dimensional, higher-rank version of the question, first asked by H. Furstenberg, of whether Lebesgue measure is the only nonatomic ergodic invariant measure for the pair of transformations $\times 2$ and $\times 3$ on $\R/\Z$. Once in this framework, it is evident from work of Sigmund  \cite{Sig} and of Marcus \cite{Mar}
on partially hyperbolic toral automorphisms and from the properties of the Poulsen simplex \cite{LOS}, that for fields whose unit rank is $1$, which include real quadratic fields, 
there is an abundance of ergodic measures, \proref{poulsen},
and hence of extremal equilibrium states, see also \cite{katz}. 
We also show in this section that there is solidarity among integral ideals  with respect to the ergodicity properties of the actions of units, \proref{oneidealsuffices}. 

In Section \ref{berendsection}, we look at the  topological version of the problem and 
we identify the number fields for which \cite[Theorem 2.1]{B} can be used to give a complete description of the invariant closed sets.  In \thmref{conjecturalclassification} we summarize the consequences, for extremal equilibrium at low temperature, of the current knowledge on the generalized Furstenberg conjecture. For fields of unit rank at least $2$ that are not complex multiplication fields, i.e. that have no proper subfields of the same unit rank, we show that if there is an extremal KMS state that does not arise from a finite orbit or from Lebesgue measure, then it must arise from a zero-entropy, nonatomic ergodic invariant measure; it is not known whether such a measure exists. For complex multiplication fields of unit rank at least $2$, on the other hand, it is known that there are other measures, arising from invariant subtori.
 As a byproduct, we also provide in \proref{ZWclaim} a proof of an interesting fact stated in \cite{ZW}, namely the units acting on algebraic integers are generic among toral automorphism groups that have Berend's ID property.  

We conclude our analysis in Section \ref{prim} by computing the topology of the quasi-orbit space of the action 
$\uk\acts \hatok$ for number fields satisfying Berend's conditions. As an application we also obtain an explicit description of the primitive ideal space of the C*-algebra $C(\ok \rtimes \uk)$, \thmref{primhomeom}.
For the most part, sections \ref{unitaction} and \ref{berendsection} do not depend on  
operator algebra considerations other than for the motivation and the application, which are discussed in sections \ref{FromKMS} and \ref{prim}. 

\noindent{\sl Acknowledgments:} This research started as an Undergraduate Student Research Award project 
and the authors acknowledge the support from the Natural Sciences and Engineering Research Council of Canada. We would like to thank Martha {\L}\k{a}cka for pointing us to \cite{LOS}, and we also especially thank Anthony Quas for bringing Z. Wang's work \cite{ZW} to our attention, and for many helpful comments, especially those leading to Lemmas \ref{partition} and \ref{Anthony'sLemma1}.

\section{From KMS states to invariant measures and isotropy}\label{FromKMS}
Our approach to describing the tracial states of the C*-algebras $\bigoplus_{\gamma\in\clk} C^*(J_\gamma\rtimes \uk)$
is shaped by the following three observations. First, the tracial states of a group C*-algebra form a Choquet simplex \cite{thoma}, so it suffices to focus our attention on the {\em extremal traces}. Second, there is a canonical isomorphism $C^*(J\rtimes \uk) \cong C^*(J)\rtimes \uk $, which we may combine with the Gelfand transform for $C^*(J)$, thus obtaining an isomorphism of $C^*(J\rtimes \uk)$ to the transformation group C*-algebra $C(\hat{J})\rtimes \uk$, associated to the transposed action of 
 $\uk$ on  the continuous complex-valued functions on the compact dual group $\hat{J}$. 
 Specifically, the action of $\uk$ on $\hat{J}$ is  determined by  
 \begin{equation}\label{actiononjhat}
(u \cdot \chi )(j):= \chi( u j), \qquad u \in \uk,  \ \  \chi \in \hat{J}, \ \  j\in J,
\end{equation}
 or by $\langle j, u \cdot \chi \rangle = \langle u j, \chi\rangle$, if we use
 $\langle \ , \ \rangle$ to denote the duality pairing of $J$ and $\hat J$.
Third, this puts the problem of describing the tracial states squarely in the context of
 Neshveyev's  characterization of traces on crossed products,
so our task is to identify and describe the relevant ingredients of this characterization. 
In brief terms, when \cite[Corollary 2.4]{nes} is interpreted in the present situation, it says that  that for each integral ideal $J$, the  extremal traces on $C(\hat{J})\rtimes \uk$ are parametrized by triples $(H,\chi,\mu)$ in which 
 $H$ is a subgroup of $\uk$, $\chi$ is a character of $H$, and $\mu$ is an ergodic $\uk$-invariant measure on $\hat{J}$ such that the set of points in $\hat{J}$ whose isotropy subgroups for the action of $\uk$ are equal to $H$ has full $\mu$ measure. 
 
Recall that, by definition, an $\uk$-invariant probability measure $\mu$ on $\widehat{J}$ is \emph{ergodic invariant} for the action of $\uk$ if 
$\mu(A) \in\{0,1\}$ for every $\uk$-invariant Borel set $A\subset \hat{J} $. 
Our first simplification is that the action of  $\uk$ on $\hat J$ automatically has $\mu$-almost everywhere constant isotropy 
with respect to each ergodic invariant probability measure $\mu$.

\begin{lemma}\label{automaticisotropy}
Let $K$ be an algebraic number field with ring of integers $\ok$ and group of units $\uk$
and let $J$ be a nonzero ideal in $\ok$. Suppose $\mu$ is an ergodic $\uk$-invariant probability 
measure on $\hat J$. Then there exists a unique subgroup $H_\mu$ of $\uk$ such that 
the isotropy group $(\uk)_\chi:= \{u \in \uk: u\cdot \chi = \chi\}$ is equal to $H_\mu$ for $\mu$-a.a. characters $\chi\in \hat J$.
\end{lemma}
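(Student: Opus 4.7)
The plan is to exploit two elementary features of the setup: Dirichlet's theorem makes $\uk$ countable (in fact finitely generated abelian), and, crucially, abelian. The strategy is to identify $H_\mu$ directly as the set of units whose fixed-point set has full $\mu$-measure, and then to assemble a full-measure set on which the isotropy equals exactly $H_\mu$.

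Concretely, for each $u \in \uk$ I would introduce the fixed-point set $A_u := \{\chi \in \hat J : u \cdot \chi = \chi\}$, which is closed (hence Borel) because the action is by homeomorphisms. The abelianness of $\uk$ implies that $A_u$ is $\uk$-invariant: if $u\cdot\chi = \chi$ and $v \in \uk$, then $u\cdot(v\cdot\chi) = v\cdot(u\cdot\chi) = v\cdot\chi$. Ergodicity therefore forces $\mu(A_u) \in \{0,1\}$, and I would set
\[
H_\mu := \{u \in \uk : \mu(A_u) = 1\}.
\]
Using $A_e = \hat J$, $A_{u\inv} = A_u$, and $A_u \cap A_v \subseteq A_{uv}$ (so that $\mu(A_{uv})=1$ whenever $\mu(A_u)=\mu(A_v)=1$), it is direct to check that $H_\mu$ is a subgroup of $\uk$.

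To finish, consider
\[
B := \Bigl( \bigcap_{u \in H_\mu} A_u \Bigr) \cap \Bigl( \bigcap_{u \in \uk \setminus H_\mu} (\hat J \setminus A_u) \Bigr).
\]
Since $\uk$ is countable and each factor is a countable intersection of full-measure sets, $\mu(B) = 1$. By construction, for every $\chi \in B$ one has $u\cdot\chi = \chi$ if and only if $u \in H_\mu$, so $(\uk)_\chi = H_\mu$ on $B$. Uniqueness is immediate: any subgroup $H \leq \uk$ with $(\uk)_\chi = H$ for $\mu$-a.a.\ $\chi$ must satisfy $H \subseteq H_\mu$ (each $u \in H$ fixes a full-measure set, so $\mu(A_u) = 1$) and $H_\mu \subseteq H$ (each $u \in H_\mu$ lies in the isotropy of some point of $B$, hence in $H$).

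The only structural inputs beyond routine measure-theoretic bookkeeping are the countability and commutativity of $\uk$, both of which are free from Dirichlet's theorem. I do not foresee a substantive obstacle; the proof reduces to a clean application of ergodicity to the countable family $\{A_u\}_{u \in \uk}$.
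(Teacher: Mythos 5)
Your proof is correct, and it takes a genuinely different route from the paper's. The paper partitions $\hat J$ according to the value of the isotropy subgroup, setting $M_H := \{\chi \in \hat J : (\uk)_\chi = H\}$ for each subgroup $H \leq \uk$; it then argues that $\uk$ has only countably many subgroups (via Dirichlet's theorem), that each $M_H$ is an invariant Borel set (a countable intersection of closed and open sets), and that since the $\mu(M_H)$ sum to $1$ over a countable disjoint invariant family, ergodicity forces exactly one $M_H$ to carry full measure. You instead apply ergodicity one unit at a time to the fixed-point sets $A_u$, which are closed and invariant by commutativity, and define $H_\mu := \{u : \mu(A_u)=1\}$ directly; the full-measure set $B$ on which the isotropy is exactly $H_\mu$ is then assembled from countably many full-measure sets. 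Your version is somewhat more economical: it needs only the countability of $\uk$ itself rather than the countability of its lattice of subgroups, it sidesteps the verification that each $M_H$ is Borel, and it yields an explicit description of $H_\mu$ as the set of units whose fixed-point set is $\mu$-conull. The paper's version has the mild advantage of exhibiting the entire isotropy partition $\{M_H\}$ at once, which makes the link to Neshveyev's hypothesis of almost-everywhere constant isotropy transparent. One small point worth tightening in your uniqueness argument: to conclude $H_\mu \subseteq H$ you should intersect $B$ with the conull set $\{\chi : (\uk)_\chi = H\}$ and evaluate the isotropy at a common point, rather than at an arbitrary point of $B$; as written, ``lies in the isotropy of some point of $B$'' does not by itself place $u$ in $H$. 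This is a one-line fix and does not affect the correctness of the approach.
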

\begin{proof}
For each subgroup $H \leq \uk$, 
let $M_H := \{\chi \in \hat J \mid (\uk)_\chi = H\}$ be the set of characters of $J$ with isotropy equal to $H$. 
Since the isotropy is constant on  orbits, each $M_H$ is $\uk$-invariant,
and clearly the $M_H$ are mutually disjoint. 
By Dirichlet's unit theorem $\uk \cong W_K \times \Z^{r+s-1}$ with $W_K$ finite, and $r$ and $2s$ the number of real and complex embeddings of $K$, respectively.
Thus every subgroup of $\uk$ is generated by at most $|W_K| + (r+s-1)$ generators, 
and hence $\uk$ has only countably many subgroups. Thus $\{M_H: H \leq \uk\}$ is a countable partition of $\hat J$
into  subsets of constant isotropy.  

We claim that each $M_H$ is a Borel measurable set in $\hat J$. To see this, observe:

\begin{align*}
M_H =&\{ \chi \in \hat J: u\cdot \chi = \chi \text{ for all }u \in H\text{ and } u\cdot \chi \neq \chi \text{ for all }u \in \uk\setminus H\}\\
=& \Big(\bigcap\limits_{u \in H} \{\chi \in \hat J \mid \chi\inv(u\cdot\chi)=1\} \Big)\bigcap \Big( \bigcap\limits_{u \in \uk\setminus H} \{\chi \in \hat J \mid \chi\inv(u\cdot \chi)\ne 1\}\Big)
\end{align*}
because $u\cdot \chi = \chi$ iff $\chi\inv(u\cdot\chi)=1$.
Since the map $\chi \mapsto\chi\inv(u\cdot \chi)$ is continuous on $\hat J$,
the sets in the first intersection are closed and those in the second one are open. By above, the intersection is countable, so $M_H$ is Borel-measurable, as desired.

 For every Borel measure $\mu$ on $\hat J$, we have 
 \[
\sum\limits_{H \leq \uk} \mu (M_H) = \mu\Big(\bigcup\limits_{H \leq \uk} M_H \Big)  = 1,
\]
 so at least one $M_H$ has positive measure. If $\mu$ is ergodic $\uk$-invariant, then there exists 
 a unique ${H_\mu \leq \uk}$ such that $\mu(M_{H_\mu}) = 1$ and thus $H_\mu$ is the (constant) isotropy group of $\mu$-a.a points $\chi \in \hat J$.
 \end{proof}

Since each ergodic invariant measure determines an isotropy subgroup, the characterization of extremal traces 
from \cite[Corollary 2.4]{nes} simplifies as follows.

\begin{theorem}\label{thm:nesh} Let $K$ be an algebraic number field with ring of integers 
$\ok$ and group of units $\uk$ and let $J$ be a nonzero ideal in $\ok$.   
Denote the standard generating unitaries of $C^*(J\rtimes \uk)$ by  $\delta_j$  for  $j\in J$ and $\nu_u$ for $u \in \uk$.
Then for each extremal trace $\tau$ on $C^*(J\rtimes \uk)$ there exists a unique  probability measure $\mu_\tau$ on $\hat J$ such that
\begin{equation} \label{mufromtau}
\int_{\hat J} \<j, x\> d\mu_\tau(x) = \tau(\delta_j) \quad \text{ for } j \in J.
\end{equation}
The probability measure $\mu_\tau $ is ergodic $\uk$-invariant, and 
if we denote by $H_{\mu_\tau}$ its associated isotropy subgroup 
from \lemref{automaticisotropy},
then the function $\chi_\tau$ defined  by $\chi_\tau(h):= \tau(\nu_h)$ for $h \in H_{\mu_\tau}$ is a character on $H_{\mu_\tau}$.

 Furthermore, the map $\tau \mapsto (\mu_\tau,\chi_\tau)$ is a bijection of the set of extremal traces of $C^*(J\rtimes \uk)$ onto the set of pairs $(\mu, \chi)$ consisting of an ergodic $\uk$-invariant probability measure $\mu$ on $\hat J$
and a character $\chi \in \widehat H_\mu$. The inverse  map $(\mu,\chi) \mapsto \tau_{(\mu,\chi)}$ is determined by 
\begin{equation} \label{muchi-parameters}
\tau_{(\mu,\chi)}(\delta_j \nu_u) = \begin{cases}\displaystyle\chi(u)\int_{\hat J} \<j, x\>d\mu(x) &\text{ if $u \in H_\mu$}\\ 0 &\text{ otherwise,}\end{cases}
\end{equation}
for $j\in J$ and $u\in \uk$.
\end{theorem}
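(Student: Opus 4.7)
The plan is to transport the problem from the group C*-algebra picture to the transformation group C*-algebra picture and then invoke Neshveyev's classification of traces. Concretely, I would use the canonical chain of isomorphisms $C^*(J\rtimes \uk) \cong C^*(J)\rtimes \uk \cong C(\hat J)\rtimes \uk$, where the final step is the Gelfand transform combined with Pontryagin duality. Under this identification the generator $\delta_j$ becomes the continuous function $x \mapsto \<j,x\>$ on $\hat J$, while $\nu_u$ remains the canonical unitary implementing the action \eqref{actiononjhat} of $\uk$. All subsequent work is carried out on $C(\hat J) \rtimes \uk$.

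Next I would invoke \cite[Corollary 2.4]{nes}, which in this setting says that every extremal trace on $C(\hat J)\rtimes \uk$ arises from a triple $(H,\chi,\mu)$, with $H$ a subgroup of $\uk$, $\chi \in \widehat H$, and $\mu$ an ergodic $\uk$-invariant probability measure on $\hat J$ concentrated on the set $M_H = \{x \in \hat J : (\uk)_x = H\}$; conversely each such triple gives an extremal trace, via the formula that specialises to \eqref{muchi-parameters}. The key reduction from triples to pairs then comes from \lemref{automaticisotropy}: for an ergodic $\uk$-invariant $\mu$ there is a unique $H_\mu \leq \uk$ with $\mu(M_{H_\mu})=1$, which forces $H = H_\mu$ in Neshveyev's triple. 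Hence the data collapse to a pair $(\mu,\chi)$ with $\chi \in \widehat{H_\mu}$, and the map $(\mu,\chi) \mapsto \tau_{(\mu,\chi)}$ defined by \eqref{muchi-parameters} is well defined and surjective onto the extremal traces.

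To obtain the inverse $\tau \mapsto (\mu_\tau, \chi_\tau)$ and to verify formulas \eqref{mufromtau} and the definition of $\chi_\tau$, I would specialise \eqref{muchi-parameters}: setting $u = 1$ yields $\tau(\delta_j) = \int_{\hat J} \<j,x\>\,d\mu(x)$, so $\mu_\tau$ is uniquely recovered from the values $\tau(\delta_j)$ by Bochner's theorem on the discrete group $J$ (the values form a positive definite function on $J$, and an invariant probability measure on $\hat J$ is determined by its Fourier--Stieltjes transform). Setting $j=0$ and restricting $u$ to $H_{\mu_\tau}$ gives $\tau(\nu_h) = \chi(h)$, identifying $\chi_\tau$ as a character on $H_{\mu_\tau}$. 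Since both maps respect the algebraic relations and traces are determined by their values on the spanning set $\{\delta_j \nu_u\}$, they are mutually inverse, proving the bijection.

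The main substantive step is to cleanly match the hypotheses and conclusions of \cite[Corollary 2.4]{nes} with the present setup: I need to verify that Neshveyev's required concentration of $\mu$ on $M_H$ is automatic in our case (via \lemref{automaticisotropy}), that his extremality criterion coincides with ergodicity of $\mu$, and that his character data corresponds exactly to $\chi \in \widehat{H_\mu}$. Once this dictionary is fixed, everything else is bookkeeping: verifying the formula \eqref{muchi-parameters}, checking that the vanishing outside $H_\mu$ is forced by $\uk$-invariance of $\mu$, and recognising \eqref{mufromtau} as the Fourier inversion statement that recovers $\mu_\tau$ from $\tau$.
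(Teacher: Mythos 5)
Your proposal follows essentially the same route as the paper: pass to $C(\hat J)\rtimes\uk$ via the Gelfand transform, apply Neshveyev's \cite[Corollary 2.4]{nes}, and use \lemref{automaticisotropy} to collapse his triples $(H,\chi,\mu)$ to pairs $(\mu,\chi)$, recovering $\mu_\tau$ from the Fourier coefficients $\tau(\delta_j)$ (the paper phrases this via the Riesz representation theorem rather than Bochner's theorem, but these are interchangeable here). The argument is correct and the dictionary you identify as the "main substantive step" is exactly the content of the paper's proof.
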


\begin{proof}  
Recall that equation  \eqref{actiononjhat} gives the continuous action of $\uk$ by automorphisms 
of the compact abelian group $\hat J$ obtained on  transposing the multiplicative action of $\uk$ on $J$.
There is a corresponding action $\alpha$ of $\uk$ by automorphisms of the C*-algebra  $C(\hat J)$ of continuous functions on $\hat{J}$; it is given by $\alpha_u(f) (\chi) = f(u\inv \cdot \chi)$. 

The characterization of traces \cite[Corollary 2.4]{nes} then applies to the crossed product  $C(\hat J) \rtimes_\alpha \uk$ as follows. For a given extremal tracial state $\tau$ of $C^*(J\rtimes \uk)$ there is 
a probability measure $\mu_\tau$ on $\hat J$ that arises, via the Riesz representation theorem, from the restriction 
 of $\tau$ to $C^*(J) \cong C(\hat J)$ and is characterized by its Fourier coefficients 
in equation \eqref{mufromtau}.  By \lemref{automaticisotropy}, there is a subset of $\hat J$ of full $\mu_\tau$ measure on which the isotropy subgroup is automatically constant, and is denoted by $H_{\mu_\tau}$. The unitary elements $\nu_u$ generate a copy of $C^*(\uk)$ inside $C(\hat J) \rtimes_\alpha \uk$ and the restriction of $\tau$ to these generators determines a character $\chi_\tau$ of $H_{\mu_\tau}$ given by $\chi_\tau(u) := \tau(\nu_u)$. See the proof of \cite[Corollary 2.4]{nes} for more details. 
 By \lemref{automaticisotropy}, the condition of almost constant isotropy is automatically satisfied for every ergodic invariant measure on $\hat J$, hence every ergodic invariant measure arises as $\mu_\tau$ for some extremal trace $\tau$. The parameter space for extremal tracial states
  is thus the set of all pairs $(\mu,\chi)$ consisting of an ergodic $\uk$-invariant probability measure $\mu$ on $\hat J$ and a character $\chi$ of the  isotropy subgroup $H_\mu$ of $\mu$. Formula \eqref{muchi-parameters} is a particular case of the formula in \cite[Corollary 2.4]{nes} with $f$ equal to the character function $f(\cdot) = \<j,\cdot\>$ on $\hat J$ associated to $j\in J$.
 Since for a fixed $u \in \uk$ the right hand side of \eqref{muchi-parameters} is a continuous linear functional of the integrand
 and the character functions span a dense subalgebra, this particular case is enough to imply
  \begin{equation}
\tau_{(\mu,\chi)}(f\nu_u) = \begin{cases}\displaystyle\chi(u)\int_{\hat J} f(x)d\mu(x) &\text{ if $u \in H_\mu$}\\ 0 &\text{ otherwise,}\end{cases}
\end{equation}
  for every $f\in C(\hat J)$.
 \end{proof}
 
 \section{The action of units on integral ideals}\label{unitaction}
Combining \cite[Theorem  7.3]{CDL} with \thmref{thm:nesh} above, we see that for $\beta> 2$,
the extremal KMS$_\beta$ equilibrium states of the system $(\mathfrak{T} [\ok], \sigma)$ 
are indexed by pairs $(\mu,\kappa)$ consisting of an ergodic invariant probability measure $\mu$ and a character
$\kappa$ of its isotropy subgroup relative to the action of the unit group $\uk$ on a representative of each ideal class.

If the field $K$ is imaginary quadratic, that is, if $r=0$ and $s=1$, then the group of units is finite, 
consisting exclusively of roots of unity. In this case, 
things are easy enough to describe because the space of $\uk$-orbits in $\hat{J}$ is a compact Hausdorff topological space. 
 \begin{proposition} \label{imaginaryquadratic}Suppose $K$  is an imaginary quadratic number field, let $J \subset \ok$ be an integral ideal
 and write $W_K$ for the group of units. Then the orbit space $W_K \backslash \hat J $ is a compact Hausdorff space 
 and the closed invariant sets in $\hat J$ are indexed by the closed sets in $W_K \backslash \hat J $.
Moreover, the ergodic invariant probability measures on $\hat J$ are the
equiprobability measures on the orbits and correspond to unit point masses on $W_K\backslash \hat J$.
 \end{proposition}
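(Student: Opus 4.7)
The strategy is to exploit the finiteness of $W_K$ in the imaginary quadratic case to push everything down to the orbit space and then use elementary facts about $\{0,1\}$-valued measures on a Hausdorff quotient.

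First I would set up the topology. Since $J$ is a rank-$2$ lattice in $\C$, its dual $\hat J$ is a $2$-torus, in particular a second countable compact Hausdorff abelian group, and $W_K$ acts on it by continuous automorphisms via \eqref{actiononjhat}. Because $W_K$ is finite, every orbit is a finite (hence closed) subset of $\hat J$, and the quotient map $q\colon \hat J \to W_K\backslash \hat J$ is continuous, surjective, and open (as always for group actions). To see that $W_K\backslash \hat J$ is Hausdorff, I would take two distinct orbits $W_K\cdot \chi$ and $W_K\cdot \chi'$, use normality of $\hat J$ to separate these finite closed sets by disjoint open sets $U$ and $U'$, and then replace these by their $W_K$-saturations $\bigcap_{u\in W_K} u\cdot U$ and $\bigcap_{u\in W_K} u\cdot U'$, which are saturated open neighborhoods of the two orbits that remain disjoint (since $W_K$ is finite) and descend to disjoint neighborhoods in $W_K\backslash \hat J$. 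Compactness is automatic as the continuous image of $\hat J$.

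Second, the indexing of closed invariant sets by closed sets of the quotient is a direct consequence of the previous step: a subset $F\subseteq \hat J$ is closed and $W_K$-invariant if and only if $F=q^{-1}(q(F))$ with $q(F)$ closed in $W_K\backslash \hat J$, since $q$ is an open continuous surjection whose fibres are exactly the orbits.

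Third, for the classification of ergodic invariant measures, let $\mu$ be an ergodic $W_K$-invariant probability measure on $\hat J$ and consider the pushforward $q_*\mu$ on $W_K\backslash \hat J$. For every Borel set $B\subseteq W_K\backslash \hat J$, the preimage $q^{-1}(B)$ is $W_K$-invariant and Borel, so by ergodicity
\[
q_*\mu(B)=\mu(q^{-1}(B))\in\{0,1\}.
\]
Since $W_K\backslash \hat J$ is a second countable compact Hausdorff space (hence metrizable), a Borel probability measure on it that takes only the values $0$ and $1$ is necessarily the point mass at some orbit $[\chi_0]$. Therefore $\mu$ is supported on the finite orbit $W_K\cdot \chi_0$, and $W_K$-invariance forces $\mu$ to assign equal mass to each point of the orbit, giving the equiprobability measure there. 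The converse is immediate: for any orbit $W_K\cdot \chi$, the normalized counting measure is $W_K$-invariant, and the only $W_K$-invariant Borel subsets of a single orbit are the empty set and the whole orbit, so it is ergodic; under $q$ it corresponds to the unit point mass at $[\chi]$.

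The only step of any real content is the passage from a $\{0,1\}$-valued Borel probability measure to a Dirac mass, which relies on second countability of the quotient to produce a countable separating family of Borel sets; this is automatic here because $\hat J$ is metrizable and so is its quotient by the finite group $W_K$. All other ingredients are standard features of continuous finite group actions on compact Hausdorff spaces.
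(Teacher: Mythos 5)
Your proof is correct and follows essentially the same route as the paper's: separate distinct orbits by disjoint invariant open sets to get a compact Hausdorff quotient, match closed invariant sets with closed sets downstairs via the quotient map, and identify ergodic invariant measures with $\{0,1\}$-valued (hence Dirac) measures on the quotient, lifting back to equiprobability measures on single orbits. (One terminological quibble only: $\bigcap_{u\in W_K} u\cdot U$ is the largest invariant \emph{subset} of $U$ rather than its saturation, but that is exactly the set your separation argument needs, so nothing is affected.)
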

 \begin{proof}
 Since $W_K$ is finite, distinct orbits are separated by disjoint invariant open sets, so the quotient space $W_K \backslash \hat J $ is a compact Hausdorff space. Since $\hat J$ is compact, the quotient map $q:  \hat J \to  W_K \backslash \hat J $ given by  $q(\chi ) := W_K \cdot \chi$ is a closed map by the closed map lemma, and so invariant closed sets in $\hat J$ correspond to closed sets in the quotient. 
 
 For each probability measure $\mu$ on $\hat J$, there is a probability measure $\tilde\mu$ on $W_K \backslash \hat J$   defined by \[\tilde\mu(E) := \mu(q\inv(E))\quad  \text{ for each measurable } E\subseteq W_K \backslash \hat J.\]
This maps the set of $W_K$-invariant probability measures on $\hat J$ onto the set of
 all probability measures on  $W_K \backslash \hat J $. Ergodic invariant measures correspond to unit point masses on $W_K\backslash \hat J$, and their $W_K$-invariant lifts are equiprobability measures on single orbits in $\hat J$. 
 \end{proof}
 As a result we obtain the following characterization of extremal KMS equilibrium states.
\begin{corollary}
Suppose $K$ is an imaginary quadratic algebraic number field and 
let $J_\gamma$ be an integral ideal representing the ideal class $\gamma\in \clk$.
For $\beta>2$, the extremal KMS$_\beta$ states of the system $(\mathfrak T[\ok], \sigma^N)$
are parametrized by the triples $(\gamma, W\cdot \chi, \kappa)$, where $\gamma\in \clk$, $\chi$ is a point in 
$ \hat J_\gamma$,  with orbit $W\cdot \chi$ and $\kappa$ is a character of the isotropy subgroup of $\chi$.
\end{corollary}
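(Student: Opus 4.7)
The corollary is a direct packaging of three results already in hand: \cite[Theorem 7.3]{CDL} for the passage from KMS states to tracial states, \thmref{thm:nesh} for the description of extremal traces on each summand, and \proref{imaginaryquadratic} for the special form of the ergodic invariant measures in the imaginary quadratic case. My plan is simply to chain these together, taking care that extremality is preserved at each step.

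First I would invoke \cite[Theorem 7.3]{CDL} to obtain, for $\beta>2$, an affine isomorphism between the KMS$_\beta$ simplex of $(\mathfrak{T}[\ok],\sigma)$ and the tracial state simplex of $\bigoplus_{\gamma\in\clk} C^*(J_\gamma\rtimes \uk)$. Under this isomorphism, extremal KMS states correspond to extremal traces of the direct sum. A standard convexity argument then shows that every extremal trace on a finite direct sum of unital C*-algebras is carried by exactly one summand: writing a trace as the sum of its restrictions to the summands (with the appropriate normalizations) exhibits it as a convex combination, so extremality forces all but one of these restrictions to vanish. This is the step that isolates the $\gamma\in\clk$ coordinate of the triple.

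Next, since $K$ is imaginary quadratic we have $r=0$, $s=1$, and Dirichlet's theorem gives $\uk = W_K$, a finite cyclic group. Applying \thmref{thm:nesh} to the single summand $C^*(J_\gamma\rtimes W_K)$ identifies its extremal traces with pairs $(\mu,\kappa)$ consisting of an ergodic $W_K$-invariant probability measure $\mu$ on $\hat J_\gamma$ and a character $\kappa$ of the isotropy subgroup $H_\mu$ attached to $\mu$ by \lemref{automaticisotropy}. Finally, \proref{imaginaryquadratic} pins down these measures as precisely the equiprobability measures on single $W_K$-orbits, so $\mu$ is encoded by the orbit $W_K\cdot\chi\subset \hat J_\gamma$. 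Since the isotropy $(W_K)_\chi$ is constant on each orbit, $H_\mu$ is intrinsic to the orbit, and $\kappa$ is a character of this common isotropy subgroup.

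I do not anticipate any real obstacle: each of the three bijections has been established already, and their composite gives the desired parametrization by triples $(\gamma, W_K\cdot\chi,\kappa)$. The only items worth a sentence of care are that the reduction of an extremal trace on a direct sum to a single summand is forced by convexity, and that the orbit and character data are well-defined independently of the choice of base point $\chi$ in the orbit $W_K\cdot \chi$. Neither point is deep, so the argument should be short and essentially bookkeeping.
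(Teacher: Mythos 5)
Your argument is correct and matches the paper's intent exactly: the paper offers no written proof for this corollary, presenting it as an immediate consequence of combining \cite[Theorem 7.3]{CDL}, \thmref{thm:nesh}, and \proref{imaginaryquadratic}, which is precisely the chain you assemble. Your added remark that an extremal trace on the finite direct sum must be carried by a single summand is the standard convexity fact the paper leaves implicit, and the rest is, as you say, bookkeeping.
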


Before we discuss invariant measures and isotropy  for fields with  infinite group of units, we need to revisit a few general  facts about the multiplicative action of units on the algebraic integers and, more generally, on the integral ideals. The concise discussion in \cite{ZW}
is particularly convenient for our purposes. As is customary, we let $d = [K:\Q]$ be the {\em degree} of $K$ over $\Q$.
The number $r$ of real embeddings and the number $2s$ of complex embeddings satisfy $r+2s=d$. 
We also let $n = r+s -1$ be the {\em unit rank} of $K$, namely, the free abelian rank of $\uk$ according to Dirichlet's unit theorem.
We shall denote the real embeddings of $K$  by
 $\sigma_j:K \to \R$ for $j = 1, 2, \cdots r$ and the conjugate pairs of complex embeddings of $K$ by
$\sigma_{r +j}, \sigma_{r+s+j} : K \to \C$ for $ j = 1, \cdots, s$.
Thus, there is an isomorphism 
\[
\sigma: K \otimes_\Q \R \to \R^r \times \C^s 
\]
such that 
\[ \sigma (k\otimes x) = (\sigma_1( k) x, \sigma_2(k) x, \cdots, \sigma_r(k) x;\, \sigma_{r+1}(k) x, \cdots, \sigma_{r+s}(k) x ).
\]
The ring of integers $\ok$ is a free $\Z$-module of rank $d$, and thus $\ok\otimes_\Z \R \cong \R^d \cong \R^r \oplus \C^s$.
We temporarily fix an integral basis for $\ok$, which fixes an isomorphism 
$\theta: \ok \to \Z^d$. Then, at the level of $\Z^d$,  the action of each $u \in \uk$ is implemented as left multiplication
by a matrix $A_u \in GL_d(\Z)$. Moreover, once this basis has been fixed,  the usual duality pairing 
$\langle \Z^d, \R^d/\Z^d \rangle $ given by $\langle n, t \rangle = \exp{2\pi i (n \cdot t)} $, with $n\in \Z^d$, $t\in \R^d$ and $n\cdot t = \sum_{j=1}^d n_j t_j$, gives an isomorphism of
$\R^d/\Z^d $ to $\hatok$, in which the character $\chi_t\in \hatok$ corresponding to $t\in \R^d/\Z^d$ is given by 
$\chi_t(x) = \exp{2\pi i (\theta(x) \cdot t)} $ for $x\in \ok$.
Thus, the action of a unit $u\in \uk$ is
\[(u\cdot \chi_t)(x) = \chi_t(u\cdot x) = \exp{2\pi i (A_u \theta(x) \cdot t)} = \exp{2\pi i ( \theta(x) \cdot A_u^T t)}.\]
 This implies that the action $\uk \acts \hatok$ is implemented, at the level of $\R^d/\Z^d$, by the representation 
 $\rho: \uk \to GL_d(\Z)$ defined by $\rho(u) = A_u^{T}$, cf. \cite[Theorem 0.15]{Wal}.

Similar considerations apply to the action of $\uk$ on $\hat J$ for each integral ideal $J \subset \ok$,
giving a representation $\rho_J: \uk \to GL_d(\Z)$.
For ease of reference we state the following fact about this matrix realization $\rho_J$ 
of the action of $\uk$ on $\hat{J}$.
\begin{proposition}\label{diagonalization}
The collection of matrices $\{\rho(u) : u \in \uk\}$ is simultaneously diagonalizable (over $\C$) ,
and for each $u\in \uk$ the eigenvalue list of $\rho(u)$ 
is the list of its archimedean embeddings $\sigma_k(u): k = 1, 2, \cdots r+2s$. 
\end{proposition}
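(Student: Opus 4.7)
The plan is to reduce the statement to the standard decomposition of $K$ as an étale $\Q$-algebra after extension of scalars to $\C$. First I would view $J$, together with the multiplicative action of $\uk$, as a $\Z$-module and extend scalars to obtain the $\C$-vector space $V := J \otimes_\Z \C$. Because $J$ is a nonzero ideal of $\ok$ it has full $\Z$-rank $d$ and $\Q$-spans $K$, so the inclusion $J \hookrightarrow K$ yields a canonical $\uk$-equivariant isomorphism $V \cong K \otimes_\Q \C$. With respect to the fixed integral basis of $J$, the matrix $A_u \in GL_d(\Z)$ representing multiplication by $u$ on $J$ is precisely the matrix representing the $\C$-linear extension of multiplication by $u$ on $V$.

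The crux is then the canonical decomposition
\[
K \otimes_\Q \C \;\cong\; \prod_{k=1}^{r+2s} \C, \qquad k \otimes z \;\longmapsto\; \bigl(\sigma_1(k)\,z,\; \sigma_2(k)\,z,\; \ldots,\; \sigma_{r+2s}(k)\,z\bigr),
\]
which is an isomorphism of $\C$-algebras. One derivation uses the primitive element theorem, writing $K = \Q(\alpha)$ and factoring the minimal polynomial of $\alpha$ into linear factors over $\C$; more conceptually, it is the fact that $K/\Q$ is separable, so $K \otimes_\Q \C$ is a product of $d$ copies of $\C$ indexed by the archimedean embeddings. Under this identification, multiplication by any $u \in \uk$ becomes the diagonal operator $\mathrm{diag}(\sigma_1(u),\ldots,\sigma_{r+2s}(u))$ in the basis of primitive idempotents of the product. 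Since this basis does not depend on $u$, the family $\{A_u : u \in \uk\}$ is simultaneously diagonalizable over $\C$, and the eigenvalue list of each $A_u$ is the list of archimedean embeddings of $u$.

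The final step is to transfer the conclusion from $A_u$ to $\rho_J(u) = A_u^T$. A matrix and its transpose have identical characteristic polynomials, so the eigenvalue list is unchanged. Moreover, if $P$ is a fixed invertible matrix such that $P^{-1} A_u P = D_u$ is diagonal for every $u$, then taking transposes gives $P^T A_u^T (P^T)^{-1} = D_u$, so $(P^T)^{-1}$ simultaneously diagonalizes all $\rho_J(u)$.

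I would not anticipate any serious obstacle: the substantive content is the étale decomposition of $K \otimes_\Q \C$, which is standard, and everything else is bookkeeping. The only care required is to keep track of the successive identifications $J \otimes_\Z \C \cong K \otimes_\Q \C \cong \prod_k \C$ and to note that the eigenbasis produced by the product decomposition is a fixed $\C$-basis of $V$, hence witnesses simultaneous diagonalizability of the whole family $\{A_u\}$ and, by transposition, of $\{\rho_J(u)\}$.
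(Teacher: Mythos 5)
Your proposal is correct and rests on the same underlying fact as the paper, namely the splitting of $K$ after extension of scalars indexed by the archimedean embeddings; the paper works with $K\otimes_\Q\R\cong\R^r\times\C^s$ and diagonalizes the $2\times 2$ complex blocks over $\C$, while you pass directly to $K\otimes_\Q\C\cong\prod_{k=1}^{d}\C$, which is the same decomposition one step further along. The paper in fact only sketches this and defers to the references of Lindenstrauss--Wang and Z.~Wang, so your write-up supplies details (including the transfer from $A_u$ to $\rho(u)=A_u^T$) that the paper leaves implicit, and those details are handled correctly.
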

See e.g. the discussion in \cite[Section 2.1]{LW}, and \cite[Section 2.1]{ZW} for the details. Multiplication of complex numbers in each complex embedding is regarded as the action of 2x2 matrices on $\R+i\R \cong \R^2$, and the  2x2 blocks corresponding to complex roots simultaneously diagonalize over $\C^d$.
The self duality of $\R^r\oplus \C^s $ can be chosen to be
compatible with the isomorphism mentioned right after (2.1) in \cite{LW} and with multiplication by units.
 See also \cite[Ch7]{KlausS}. 
 
When the number field $K$ is not imaginary quadratic, then $\uk$ is infinite and so
the analysis of orbits and invariant measures is much more subtle; 
for instance, most orbits are infinite, some are dense, and the orbit space does not have a Hausdorff topology.
We summarize for convenience of reference the known basic general properties in the next proposition.

\begin{proposition}\label{orbitsandisotropy}
Let $K$ be a number field with $\rank(\uk) \geq 1$, and let $J$ be an ideal in $\ok$. 
Then normalized Haar measure on $\hat J$ is ergodic $\uk$-invariant, and for each $\chi \in \hat J$,
\begin{enumerate}
\item  
the orbit $\uk\cdot \chi$ is finite if and only if $\chi$ corresponds to a point with rational coordinates in the identification $\hat J \cong \R^d /\Z^d$; in this case the corresponding isotropy subgroup is a full-rank subgroup of $\uk$;
\item the orbit $\uk\cdot \chi$ is infinite if and only if $\chi $ corresponds to a point with at least one irrational coordinate in  $\R^d /\Z^d$; 
\item the characters $\chi$ corresponding to points $(w_1,w_2,\ldots,w_d) \in \R^d$ such that the numbers $1, w_1, w_2, \ldots w_d$ are rationally independent have trivial isotropy.
\end{enumerate}
\end{proposition}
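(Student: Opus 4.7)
The plan is to verify in order the ergodicity of Haar measure and then the three enumerated claims, all of which rest on the following central algebraic input. By \proref{diagonalization}, the eigenvalues of $\rho_J(u)$ are the archimedean embeddings $\sigma_k(u)$, and since each $\sigma_k:K\to\C$ is an injective ring homomorphism, every $u\in\uk$ with $u\neq 1$ satisfies $\sigma_k(u)\neq 1$ for all $k$. Hence the integer matrix $\rho_J(u)-I$ has no zero eigenvalue when $u\neq 1$ and is therefore invertible over $\Q$. This observation will drive items (1) and (3).

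For the ergodicity statement, I would invoke the standard Fourier-analytic criterion: a continuous action of a discrete group by automorphisms on a compact abelian group is ergodic with respect to Haar measure if and only if every nontrivial element of the Pontryagin dual has infinite orbit. The Pontryagin dual of $\hat J$ is $J$, and unwinding the duality in \eqref{actiononjhat} shows that the dual action of $u\in\uk$ on $J$ is just multiplication by $u$. Since $\ok$ is an integral domain, the map $u\mapsto uj$ is injective on $\uk$ for each nonzero $j\in J$, and the hypothesis $\rank(\uk)\geq 1$ then forces every such orbit to be infinite.

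For item (1), I would work with the identification $\hat J\cong\R^d/\Z^d$ described before the proposition. The direction $(\Leftarrow)$ is immediate: a point $t$ with rational coordinates has some finite order $N$ in $\R^d/\Z^d$; since $\rho_J(u)$ has integer entries, the orbit of $t$ remains inside the finite torsion subgroup $\tfrac{1}{N}\Z^d/\Z^d$, and by orbit-stabilizer the isotropy has finite index in $\uk$, hence the same rank $n=r+s-1$. For the converse, if the orbit is finite then the isotropy has finite index in $\uk$, so it has rank $n\geq 1$ and in particular contains some $u\neq 1$; lifting the condition $u\cdot\chi=\chi$ gives $(\rho_J(u)-I)\tilde t\in\Z^d$ for any lift $\tilde t\in\R^d$, and inverting $\rho_J(u)-I$ over $\Q$ using the central input yields $\tilde t\in\Q^d$. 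Item (2) is then obtained as the contrapositive of the biconditional in (1).

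For item (3), suppose $1,w_1,\ldots,w_d$ are rationally independent and some $u\in\uk$ fixes the character $\chi_t$ corresponding to $t=(w_1,\ldots,w_d)$. Then $(\rho_J(u)-I)t=m$ for some $m\in\Z^d$, and each coordinate of this vector equation is an integer linear relation among $1,w_1,\ldots,w_d$. Rational independence forces each such relation to be trivial, so every row of $\rho_J(u)-I$ and every entry of $m$ vanishes; thus $\rho_J(u)=I$, and injectivity of a single $\sigma_k$ then forces $u=1$. The main obstacle, if any, is isolating the central algebraic input about embeddings not hitting $1$; once that invertibility of $\rho_J(u)-I$ is in hand, the remaining arguments are routine lift-and-solve manipulations.
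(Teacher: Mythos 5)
Your proof is correct, and while it rests on the same structural fact as the paper's --- namely \proref{diagonalization}, that the eigenvalues of $\rho_J(u)$ are the archimedean embeddings $\sigma_k(u)$ --- you route around the two citations to Walters that the paper uses. For ergodicity, the paper picks a single non-torsion unit $u$, notes that its eigenvalues cannot be roots of unity, and invokes the eigenvalue criterion for ergodicity of a single toral automorphism \cite[Corollary 1.10.1]{Wal}; you instead apply the Fourier-analytic criterion to the whole group action, checking that every nonzero $j \in J$ has infinite orbit under the dual action $j \mapsto uj$ because $\ok$ is a domain and $\uk$ is infinite. For item (1), the paper cites \cite[Theorem 5.11]{Wal} (finite orbits of an ergodic toral automorphism are exactly the rational points), whereas you inline the argument: since each $\sigma_k$ is injective, $\rho_J(u)-I$ has no zero eigenvalue for $u \neq 1$ and is invertible over $\Q$, so a fixed point of any $u\neq 1$ lifts to a rational vector. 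This invertibility observation is in fact exactly the engine behind Walters' theorem, and it is the same computation the paper (and you) use verbatim for item (3). The net effect is that your write-up is more self-contained and works directly with the group action rather than reducing to a single generator, at the cost of being slightly longer; both arguments are sound and there is no gap.
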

\begin{proof}
By \proref{diagonalization}, for each $u\in \uk$, the eigenvalues of the matrix $\rho(u)$ encoding the action of $u$ at the level of $\R^d/\Z^d$ are precisely the various embeddings of $u$ in the archimedean completions of $K$. 
Since $\rank(\uk) \geq 1$, there exists a non-torsion element $u \in \uk$, whose eigenvalues are not roots of unity.
Hence normalized Haar measure is ergodic for the action of $\{\rho(u): u\in\uk\}$ by \cite[Corollary 1.10.1]{Wal} 
and the first assertion now follows from \cite[Theorem 5.11]{Wal}.
The isotropy is a full rank subgroup of $\uk$ because $|\uk/(\uk)_x| = |\uk\cdot x| < \infty$. 

Let $w = (w_1, w_2, \cdots, w_d)$ be a point in $\R^d/\Z^d$ such that
$1, w_1, \ldots, w_d$ are rationally independent.
Suppose $w$ is a fixed point for the matrix $\rho(u) \in GL_d(\Z)$ 
acting on $\R^d/\Z^d$. 
 Then $\rho(u) w = w$ (mod $\Z^d$) and hence $(\rho(u) - I)w \in \Z^d$, i.e.
 \[[(\rho(u)-I)w]_i = \sum\limits_{j=1}^d (\rho(u)-I)_{ij}w_j \in \Z\] 
 for all $1 \le i \le d$. Since $(\rho(u)-I)_{ij} \in \Z$ for all $i,j$, the rational independence of $1, w_1, \ldots, w_d$ implies that $\rho(u) = I$, so $u=1$, as desired. 
 \end{proof}

We see next that for the number fields with unit rank 1
there are many more ergodic invariant probability measures on $\hatok$ than just Haar measure 
and measures supported on finite orbits. In fact, a smooth parametrization of these measures and of the corresponding KMS equilibrium states of $(\mathfrak T[\ok], \sigma)$ seems unattainable.

\begin{proposition}\label{poulsen}
Suppose the number field $K$ has unit-rank equal to $1$, namely, $K$ is real quadratic, mixed cubic, or complex quartic.
Then the simplex of ergodic invariant probability measures on $\hatok$ is isomorphic to the Poulsen simplex \cite{LOS}.
\end{proposition}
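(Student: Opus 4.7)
My strategy is to invoke the Lindenstrauss-Olsen-Sternfeld characterization of the Poulsen simplex \cite{LOS}: up to affine homeomorphism, it is the unique nontrivial metrizable Choquet simplex with dense extreme points. The set of $\uk$-invariant Borel probability measures on $\hatok \cong \T^d$ is automatically a metrizable Choquet simplex, because $\hatok$ is compact metrizable and $\uk$ acts by homeomorphisms; it is nontrivial because, by \proref{orbitsandisotropy}, normalized Haar measure and the equiprobability measure on any finite rational orbit are distinct ergodic $\uk$-invariant measures. The task therefore reduces to showing that the ergodic $\uk$-invariant measures (which are exactly the extreme points of this simplex) are weak-$*$ dense.

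First I reduce to a single toral automorphism. Write $\uk = W_K \times \langle u_0 \rangle$ with $W_K$ finite and $u_0$ a generator of the infinite cyclic factor. By \proref{diagonalization} the action of $u_0$ on $\hatok \cong \T^d$ is implemented by a matrix $A = \rho(u_0) \in GL_d(\Z)$ whose eigenvalues are the archimedean embeddings $\sigma_k(u_0)$. In each unit-rank-one signature $(r,s) \in \{(2,0),(1,1),(0,2)\}$, a direct check using the product formula $\prod_k|\sigma_k(u_0)|^{e_k} = 1$ (with $e_k \in \{1,2\}$) shows that if any one $|\sigma_k(u_0)|$ equals $1$ then so do all of them, at which point Kronecker's theorem forces $u_0 \in W_K$, a contradiction. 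Hence every eigenvalue of $A$ lies off the unit circle, $A$ is Anosov, and by Sigmund's theorem \cite{Sig} the ergodic $A$-invariant probability measures are weak-$*$ dense in the set of all $A$-invariant probability measures on $\T^d$.

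I next transfer this density from $\langle u_0 \rangle$ to $\uk$ by averaging. Given a $\uk$-invariant measure $\mu$ and a sequence $\nu_n$ of $u_0$-ergodic measures converging weak-$*$ to $\mu$, set
\[
\bar\nu_n := \frac{1}{|W_K|}\sum_{w\in W_K} w\cdot \nu_n.
\]
Each $\bar\nu_n$ is $\uk$-invariant; because $\mu$ is itself $W_K$-invariant, the identity
\[
\bar\nu_n(f) - \mu(f) = \frac{1}{|W_K|}\sum_{w\in W_K}\bigl(\nu_n(f\circ w\inv) - \mu(f\circ w\inv)\bigr)
\]
gives $\bar\nu_n \to \mu$ weak-$*$. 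Moreover, any $\uk$-invariant Borel set $A$ is also $W_K$-invariant, so $(w\cdot\nu_n)(A) = \nu_n(w\inv A) = \nu_n(A) \in \{0,1\}$ by $u_0$-ergodicity of $\nu_n$; hence $\bar\nu_n(A) = \nu_n(A) \in \{0,1\}$ and $\bar\nu_n$ is $\uk$-ergodic. Thus the ergodic $\uk$-invariant measures are dense, and the identification with the Poulsen simplex follows from \cite{LOS}.

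The main obstacle is the hyperbolicity check in Step 2: it requires verifying across all three permitted signatures that no eigenvalue of $\rho(u_0)$ lies on the unit circle, exploiting the rigidity of unit rank one, which pairs complex conjugate moduli so tightly that a single eigenvalue on the circle propagates to all of them and collides with Kronecker's theorem. Once that is settled, Sigmund's theorem carries the main analytic burden, and the $W_K$-averaging step that converts $u_0$-ergodicity into $\uk$-ergodicity is a short consequence of the commutativity of $\uk$ together with the finiteness of $W_K$.
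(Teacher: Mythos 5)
Your proof is correct and follows the same overall route as the paper's: approximate an arbitrary invariant measure by ergodic ones using Sigmund's theorem for the toral automorphism coming from a non-torsion unit, then invoke the Lindenstrauss--Olsen--Sternfeld uniqueness theorem for the Poulsen simplex. You do, however, supply substantive detail at the two points where the paper is terse. First, the paper only claims the automorphism is \emph{partially} hyperbolic and cites Marcus \cite{Mar} alongside Sigmund \cite{Sig} to cover possible unit-modulus eigenvalues; your product-formula-plus-Kronecker argument shows that when $r+s=2$ the two archimedean moduli of a unit are tied together, so a single eigenvalue of $\rho(u_0)$ on the unit circle would force $u_0$ to be a root of unity. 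Hence $\rho(u_0)$ is genuinely Anosov in all three signatures and Sigmund's Axiom~A result applies directly, making the appeal to \cite{Mar} unnecessary in unit rank one. Second, the paper passes from invariance under the infinite cyclic part to invariance under all of $\uk$ with the single sentence ``this remains true when we include the torsion elements of $\uk$''; your $W_K$-averaging of the approximating measures $\nu_n$, together with the observation that a $\uk$-invariant Borel set is both $u_0$-invariant and $W_K$-invariant so that $\bar\nu_n$ stays ergodic for the full group, is an honest proof of that assertion. The only cosmetic difference is that the paper approximates by equiprobabilities on periodic orbits (which is what \cite{Sig} actually produces) rather than by general ergodic measures; both families are dense in the extreme boundary, so nothing is lost either way.
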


\begin{proof}
The fundamental unit gives a partially hyperbolic toral automorphism of $\hatok$, for which Haar measure is ergodic invariant.
By \cite{Mar,Sig}, the invariant probability measures of such an automorphism that are supported on finite orbits are dense in the space of all invariant probability measures. This remains true when we include the torsion elements of $\uk$. Since these equiprobabilities supported on finite orbits are obviously ergodic invariant and hence extremal among invariant measures, it follows from \cite[Theorem 2.3]{LOS} that the simplex of invariant probability measures on $\hatok$ is isomorphic to the Poulsen simplex.  
\end{proof}

 For fields with unit rank at least  $2$,  whether normalized Haar measure and equiprobabilities supported on finite orbits are the only ergodic $\uk$-invariant probability measures is a higher-dimensional version of the celebrated Furstenberg conjecture, according to which Lebesgue measure is the only non-atomic probability measure on $\T = \R/\Z$ that is jointly ergodic invariant for the  transformations $\times 2$ and $\times 3$ on $\R$ modulo $\Z$. As stated, this remains open, however, Rudolph and Johnson have proved that if $p$ and $q$ are multiplicatively independent positive integers, then the only probability measure on $\R/\Z$ that is ergodic invariant for
 $\times p$ and $ \times q$ and has non-zero entropy is indeed Lebesgue measure \cite{Rud,Joh}. Number fields always give rise to automorphisms of tori of dimension at least $2$, so, strictly speaking the problem in which we are interested does not contain Furstenberg's original formulation as a particular case. 
Nevertheless, the higher-dimensional problem is also interesting and open as stated in general, and there is significant recent activity on it and on closely related problems \cite{KS,KK, KKS}.
In particular, see \cite{EL1} for a summary of the history and also a positive entropy result for higher dimensional tori along the lines of the Rudolph--Johnson theorem. 
We show next that the toral automorphism groups arising from different integral ideals have a solidarity property with respect to the generalized Furstenberg conjecture.

\begin{proposition} \label{oneidealsuffices}
 If for some integral ideal $J$ in $\ok$ the only ergodic $\uk$-invariant probability 
 measure on $\hat J$ having infinite support is normalized Haar measure, then 
the same is true for every integral ideal in $\ok$.
\end{proposition}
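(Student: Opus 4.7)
The plan is to compare two arbitrary nonzero integral ideals $J_1,J_2\subset\ok$ by constructing a $\uk$-equivariant map between their dual tori and transporting invariant measures across it. For any nonzero $a\in J_1$, the containment $aJ_2\subseteq J_1\cdot J_2\subseteq J_1$ realizes $J_2$, via multiplication by $a$, as a $\uk$-invariant subgroup of finite index in $J_1$ (both are free $\Z$-modules of rank $d$). Dualizing this inclusion gives a continuous, $\uk$-equivariant, surjective homomorphism of compact abelian groups
\[
\pi\colon \hat J_1 \twoheadrightarrow \hat J_2,\qquad \pi(\chi)(x)=\chi(ax),
\]
with finite kernel $\widehat{J_1/aJ_2}$. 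Being a surjective homomorphism of compact groups, $\pi$ carries normalized Haar measure on $\hat J_1$ to normalized Haar measure on $\hat J_2$, and of course $\pi$ sends finite orbits to finite orbits.

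Assume the hypothesis holds on $\hat J_1$, and let $\mu'$ be an ergodic $\uk$-invariant probability measure on $\hat J_2$ with infinite support. First I would lift $\mu'$ to a $\uk$-invariant probability measure $\mu$ on $\hat J_1$ by fiberwise averaging,
\[
\mu \;=\; \int_{\hat J_2}\Big(\tfrac{1}{|\ker\pi|}\sum_{\chi\in\pi^{-1}(y)}\delta_\chi\Big)\,d\mu'(y),
\]
whose $\uk$-invariance follows from the equivariance of $\pi$ together with $\uk$ permuting the (finite) fibers, and which satisfies $\pi_\ast\mu=\mu'$. Next, I would form the ergodic decomposition $\mu=\int \mu_x\,d\nu(x)$ into ergodic $\uk$-invariant measures on $\hat J_1$; pushing forward gives $\mu'=\int \pi_\ast \mu_x\,d\nu(x)$, and further ergodically decomposing each $\pi_\ast\mu_x$ on $\hat J_2$ presents $\mu'$ as a barycenter of ergodic measures on $\hat J_2$. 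By uniqueness of the ergodic decomposition of the ergodic measure $\mu'$, $\nu$-almost every ergodic ingredient must equal $\mu'$, forcing $\pi_\ast\mu_x=\mu'$ for $\nu$-a.e.\ $x$. Since $\mu'$ has infinite support and $\pi$ has finite fibers, each such $\mu_x$ also has infinite support.

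The hypothesis on $\hat J_1$ then forces $\mu_x$ to equal normalized Haar measure on $\hat J_1$ for $\nu$-a.e.\ $x$, so $\mu$ is itself Haar and $\mu'=\pi_\ast \mu$ is normalized Haar measure on $\hat J_2$, as required. The step deserving the most care is the Fubini-type manipulation of iterated ergodic decompositions, which relies on the standard Borel structure carried by the space of $\uk$-invariant probability measures for a continuous action of a countable group on a compact metric space; once this machinery is invoked, the rest of the argument is essentially algebraic.
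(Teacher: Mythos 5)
Your proof is correct, but it takes a genuinely different route from the paper's. The paper works with an inclusion $J\subseteq I$ and proves the transfer \emph{upward}, from the quotient torus $\hat J$ to the covering torus $\hat I$ (\lemref{liftingF}): an ergodic measure on $\hat I$ with infinite support pushes down to Haar measure on $\hat J$, and a local trivialization of the restriction map (\lemref{partition}) then shows it is absolutely continuous with respect to $\lambda_{\hat I}$, hence equal to it by a Birkhoff-type lemma (\lemref{Anthony'sLemma1}); the proposition follows from two such applications, routed through $\ok$ and a principal multiple $q\ok\subseteq I$. You instead note that for an arbitrary ordered pair one can always place the ideal with the known property \emph{on top} of the covering, via $aJ_2\subseteq J_1$ with $a\in J_1\setminus\{0\}$, so that only the downward transfer is needed, and you carry it out by lifting the unknown ergodic measure through fiberwise uniform averaging and appealing to the ergodic decomposition together with extremality of ergodic measures. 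Your version is shorter and dispenses with the covering-space measure estimates entirely, at the price of invoking the ergodic decomposition theorem for the countable group $\uk$ acting on the compact metric space $\hat J_1$ (standard and legitimately available here); the paper's version stays more elementary in that respect, using only the Birkhoff theorem and explicit local inverses of the restriction map. Two small points worth making explicit in your write-up: the fiberwise average is a well-defined Borel (indeed continuous) disintegration because $y\mapsto|\ker\pi|^{-1}\sum_{\chi\in\pi^{-1}(y)}f(\chi)$ descends to a continuous function of $y$ for every continuous $f$ on $\hat J_1$; and each $\pi_*\mu_x$ is already ergodic, since preimages of invariant sets under the equivariant map $\pi$ are invariant, so the ``further decomposition'' you mention is unnecessary and the extremality of $\mu'$ applies directly.
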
 
The proof depends on the following lemmas.

\begin{lemma}\label{partition}
Let $J\subseteq I$ be two integral ideals in $\ok$ and let $r: \hat I \to \hat J$ be the restriction map. Denote by $\lambda_{\hat I}$ normalized Haar measure on $\hat I$.
 For each 
$\gamma \in \hat J$, there exists a neighborhood $N$ of $\gamma$ in $\hat J$ and homeomorphisms
$h_j$ of $N$  into $\hat I$ for $j = 1, 2, \ldots , | I/J |$, with mutually  disjoint images and such that
\begin{enumerate}
\item $\lambda_{\hat I} (h_j(E))  = \lambda_{\hat I} ( h_k(E))$ for every measurable $E\subseteq N$ and  $1\leq j, k \leq | I/J |$;
\item $r\circ h_j = \id_N$;
\item $r\inv ( E) = \bigsqcup_j h_j(E)$ for all $E \subseteq N$, that is, the $h_j$'s form a complete system of local inverses of $r$ on $N$.
\end{enumerate}
\end{lemma}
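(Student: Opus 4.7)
The plan is to exploit the fact that $r : \hat I \to \hat J$ is a finite-sheeted covering homomorphism of compact abelian groups; the lemma then amounts to constructing a local trivialization of this cover near $\gamma$ and invoking translation invariance of Haar measure. First I identify the kernel: a character of $I$ restricts trivially to $J$ if and only if it annihilates $J$, so Pontryagin duality gives $K := \ker r \cong \widehat{I/J}$, a finite subgroup of $\hat I$ of order $|I/J|$, and $r$ is a continuous open surjection with finite fiber $K$.

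For the core construction, I fix any lift $\eta_0 \in r^{-1}(\gamma)$ and use finiteness of $K$ together with continuity of subtraction in $\hat I$ to choose an open symmetric neighborhood $V$ of the identity satisfying $(V - V) \cap K = \{0\}$. Then $r$ restricts to a homeomorphism of $\eta_0 + V$ onto the set $N := r(\eta_0 + V) \subseteq \hat J$, which is an open neighborhood of $\gamma$ because $r$ is open. Enumerating $K = \{k_1, k_2, \ldots, k_{|I/J|}\}$ with $k_1 = 0$, I define
\[
h_j(\chi) := k_j + \bigl(r\restr{\eta_0 + V}\bigr)\inv(\chi), \qquad \chi \in N,
\]
so that each $h_j$ is a homeomorphism of $N$ onto the open set $k_j + (\eta_0 + V) \subseteq \hat I$.

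Verification of the three properties is then straightforward: (2) holds because $r(k_j) = 0$; (3) because the fiber of $r$ above any $\chi \in N$ is the coset $h_1(\chi) + K = \{h_j(\chi) : 1 \le j \le |I/J|\}$, and the condition $(V - V) \cap K = \{0\}$ forces the images $h_j(N)$ to be pairwise disjoint; (1) follows at once from translation invariance of $\lambda_{\hat I}$, since $h_j(E) = k_j + h_1(E)$. The only nontrivial point — and the closest thing to an obstacle — is arranging the neighborhood $V$ small enough that distinct $K$-translates of $\eta_0 + V$ remain disjoint, which is precisely what the initial choice $(V - V) \cap K = \{0\}$ delivers; this is possible because $K$ is finite and the group operation on $\hat I$ is continuous at $0$.
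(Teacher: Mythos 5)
Your proof is correct and follows essentially the same route as the paper: both identify the finite kernel $J^\perp=\ker r$ of order $|I/J|$, shrink a neighborhood of a lift of $\gamma$ until its kernel-translates are disjoint (your condition $(V-V)\cap K=\{0\}$ plays the role of the paper's set $B_1=\bigcap_\kappa \kappa\inv A_\kappa$), define the $h_j$ as kernel-translates of the single local inverse, and get (1) from translation invariance of Haar measure. If anything, your explicit choice of a lift $\eta_0\in r\inv(\gamma)$ handles the translation to $\gamma$ slightly more carefully than the paper's closing step.
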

\begin{proof}
Let $J^\perp:= \{\kappa\in \hat I: \kappa (j) = 1, \forall j\in J\}$ be the kernel of the restriction map $r: \hat I \to \hat J$. Since 
$J^\perp$ is a subgroup of order $| I/J | <\infty$, and since $\hat I$ is Hausdorff, we may choose a collection $\{ A_\kappa: \kappa \in  J^\perp\}$ of mutually disjoint open subsets of $\hat I$ such that $\kappa \in A_\kappa$ for each $\kappa\in  J^\perp$.  
Define  $B_1:= \bigcap_{\kappa \in J^\perp} \kappa\inv A_\kappa$ and for each $\kappa\in J^\perp$ 
let $B_\kappa := \kappa B_1$. Then 
$\{B_\kappa : \kappa \in J^\perp\}$  is a collection of mutually disjoint open sets
such that $\kappa \in B_\kappa$ and  $r(B_\kappa) = r(B_1)$ for every $\kappa \in J^\perp$. 
We claim that the restrictions $r: B_\kappa \to \hat J$ are homeomorphisms onto their image.
Since the $B_\kappa$ are translates of $B_1$ and since $r$ is continuous and open, it suffices to verify that $r$ is injective on  $B_1$. This is easy to see because if 
 $r(\xi_1) = r(\xi_2)$ for two distinct elements $\xi_1,\xi_2$ of $B_1$, then  $\xi_2 = \kappa \xi_1$ for some $\rho \in J^\perp \setminus \{1\}$, and this would contradict $B_1 \cap \kappa B_1 = \emptyset$. This proves the claim. We may then take $N:  = \gamma \, r(B_1)$	 and define $h_\rho := (r |_{B_\rho})\inv$, for which properties (1)-(3) are now easily verified.
\end{proof}

\begin{lemma}\label{Anthony'sLemma1}
Let $X$ be a measurable space and let $T: X \to X$ be measurable. Suppose that $\lambda$ is an ergodic $T$-invariant probability measure on $X$. If $\mu$ is a $T$-invariant probability measure on $X$ such that $\mu \ll \lambda$, then $\mu = \lambda$.
\end{lemma}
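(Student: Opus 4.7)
The plan is to invoke Birkhoff's pointwise ergodic theorem for the ergodic system $(X,\lambda,T)$ and use the absolute continuity $\mu \ll \lambda$ to transfer the resulting almost-everywhere convergence from $\lambda$ to $\mu$. Then the $T$-invariance of $\mu$ combined with dominated convergence will force $\mu$ and $\lambda$ to assign the same integral to every bounded measurable function, which suffices to conclude $\mu = \lambda$.

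In detail, I would fix a bounded measurable $h: X \to \R$ and consider the Birkhoff time averages $S_n h(x) := \frac{1}{n}\sum_{k=0}^{n-1} h(T^k x)$. By the pointwise ergodic theorem applied to the ergodic $T$-invariant probability $\lambda$, there is a set $G_h \subseteq X$ with $\lambda(X \setminus G_h) = 0$ on which $S_n h(x) \to \int h\, d\lambda$. Since $\mu \ll \lambda$, the $\lambda$-null set $X \setminus G_h$ is also $\mu$-null, so the same pointwise convergence holds $\mu$-a.e.\ as well.

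Next, because $|S_n h| \leq \|h\|_\infty$, dominated convergence with respect to $\mu$ yields $\int S_n h\, d\mu \to \int h\, d\lambda$ (the pointwise limit is the constant $\int h\, d\lambda$, integrated against the probability $\mu$). On the other hand, the $T$-invariance of $\mu$ gives $\int h \circ T^k\, d\mu = \int h\, d\mu$ for every $k$, so $\int S_n h\, d\mu = \int h\, d\mu$ for every $n$. Comparing the two expressions, $\int h\, d\mu = \int h\, d\lambda$ for every bounded measurable $h$; taking $h = \mathbf{1}_A$ for arbitrary measurable $A$ gives $\mu(A) = \lambda(A)$, i.e.\ $\mu = \lambda$.

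No substantial obstacle should arise, as the pointwise ergodic theorem is valid for any measurable $T$-invariant probability on a measurable space. The only small point to be careful about is that the full-measure set $G_h$ depends on $h$, but this is harmless since we apply the argument to one $h$ at a time. An alternative route would be to set $f := d\mu/d\lambda$, show via the transfer operator that $f \circ T = f$ holds $\lambda$-a.e., and then invoke ergodicity to conclude that $f$ is $\lambda$-a.e.\ constant, forcing $f \equiv 1$; but the Birkhoff argument above is more direct and avoids the dualities inherent in the transfer operator formulation.
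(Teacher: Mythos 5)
Your proof is correct and is essentially identical to the paper's own argument: both apply the Birkhoff ergodic theorem to $\lambda$, transfer the almost-everywhere convergence of the time averages to $\mu$ via absolute continuity, and then combine dominated convergence with the $T$-invariance of $\mu$ to conclude $\int f\,d\mu = \int f\,d\lambda$ for all bounded measurable $f$. No further comment is needed.
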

\begin{proof}
Fix $f \in L^\infty(\lambda)$ and define $(A_n f)(x) = \frac{1}{n} \sum\limits_{k=0}^{n-1} f(T^{k}x)$. Let $S = \{x \in X: (A_nf)(x) \to \int_X f d\lambda\}$. By the Birkhoff ergodic theorem, we have that $\lambda(S^c)=0$, and so $\mu(S^c)=0$ as well, that is, $(A_nf)(x) \to \int_X f d\lambda$ $\mu$-a.e..  Since $f \in L^\infty(\lambda)$ and $\mu \ll \lambda$, we have that $f \in L^\infty(\mu)$ as well, with $\|f\|_\infty^\mu \le \|f\|_\infty^\lambda$. Observe that $|(A_nf)(x)| \le \|f\|_\infty^\lambda$ for $\mu$-a.e. $x$, and so by the dominated convergence theorem, $\int_X A_n f d\mu \to \int_X\left(\int_X f d\lambda\right) d\mu = \int_X f d\lambda$, with the last equality because $\mu(X)=1$.

Because $\mu$ is $T$-invariant, we have that $\int_X A_n f d\mu = \int_X f d\mu$ for all $n$. Combining this with the above implies that $\int_X f d\lambda = \int_X f d\mu$ for all $f \in L^\infty(\lambda)$. In particular, this holds for the indicator function of each measurable set, and so $\mu = \lambda.$
\end{proof}

\begin{lemma} \label{fromItoJ}
Let $J \subseteq I$ be two integral ideals in $\hatok$ and let $r: \hat I \to \hat J$ be the restriction map. 
If $\mu$ is an ergodic $\ok^*$-invariant probability measure on $\hat I$, then $\tilde{\mu}:= \mu \circ r\inv$ is an ergodic invariant probability measure on $\hat J$. Moreover, the support of $\mu$ is finite if and only if the support of $\tilde \mu$ is finite.
\end{lemma}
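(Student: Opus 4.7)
The plan is to exploit the fact that the restriction map $r:\hat I \to \hat J$ is equivariant for the $\uk$-actions, then simply transport ergodicity and invariance from $\mu$ to $\tilde\mu$ along $r$, and finally observe that the fibers of $r$ are finite to handle the support statement.

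First I would verify equivariance: the map $r$ is the Pontryagin dual of the inclusion $J \hookrightarrow I$, and since the action of $\uk$ on characters is defined by precomposition with multiplication by units (see \eqref{actiononjhat}), a direct unwinding gives $r(u\cdot\chi)(j) = \chi(uj) = (u\cdot r(\chi))(j)$ for every $j \in J$, $u \in \uk$, and $\chi \in \hat I$. Hence $r\inv(u\cdot E) = u\cdot r\inv(E)$ for every subset $E \subseteq \hat J$, and $r$ sends Borel sets to images whose preimages are Borel.

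From this, the invariance and probability properties of $\tilde\mu$ are immediate: $\tilde\mu(\hat J) = \mu(r\inv(\hat J)) = \mu(\hat I) = 1$, and
\[
\tilde\mu(u\cdot E) = \mu(r\inv(u\cdot E)) = \mu(u\cdot r\inv(E)) = \mu(r\inv(E)) = \tilde\mu(E).
\]
For ergodicity, if $A \subseteq \hat J$ is Borel and $\uk$-invariant, then by equivariance $r\inv(A)$ is a $\uk$-invariant Borel subset of $\hat I$, so $\tilde\mu(A) = \mu(r\inv(A)) \in \{0,1\}$ by ergodicity of $\mu$.

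For the \emph{moreover} clause, the key observation is that the kernel of $r$ is the annihilator $J^\perp$ from \lemref{partition}, a finite subgroup of $\hat I$ of order $|I/J|$, so every fiber of $r$ is finite. If $\mu$ has finite support $S$, then $\tilde\mu$ is supported on the finite set $r(S)$. Conversely, if $\tilde\mu$ has finite support $F \subseteq \hat J$, then $r\inv(F)$ is finite and $\mu(r\inv(F)) = \tilde\mu(F) = 1$, so $\mu$ has finite support. There is no serious obstacle here; the only point deserving care is the equivariance of $r$, which is forced by duality once one writes out the definitions.
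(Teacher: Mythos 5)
Your proof is correct and follows essentially the same route as the paper: push ergodicity and invariance forward along the $\uk$-equivariant restriction map $r$, and use the finiteness of the fibers of $r$ (the kernel being $J^\perp$ of order $|I/J|$) for the support statement. You merely spell out the equivariance computation and both directions of the support equivalence, which the paper leaves as immediate.
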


\begin{proof}
Assume $\mu$ is  ergodic invariant on $\hat I$ and let $E \subseteq \hat J$ be an $\uk$-invariant 
measurable set. Since $r$ is $\uk$-equivariant, $r\inv(E)$ is also $\uk$-invariant so $\tilde{\mu}(E):=\mu(r\inv(E)) \in \{0,1\}$ because $\mu$ is ergodic invariant. Thus, $\tilde{\mu}$ is also ergodic invariant. The statement about the support follows immediately because $r$ has finite fibers. 
\end{proof}

\begin{lemma}\label{liftingF}
Suppose $J \subseteq I$ are integral ideals in $\ok$, and let $\lambda_{\hat J}, \lambda_{\hat I}$ be normalized Haar measures on $\hat J, \hat I$, respectively. If the only ergodic $\ok^*$-invariant probability measure on $\hat J$
with infinite support is $\lambda_{\hat J}$, then the only ergodic $\ok^*$-invariant probability measure on $\hat I$ 
with infinite support is $\lambda_{\hat I}$.
\end{lemma}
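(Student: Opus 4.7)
My plan is to show that any infinite-support ergodic $\uk$-invariant probability measure $\mu$ on $\hat I$ is absolutely continuous with respect to $\lambda_{\hat I}$, and then to apply Lemma~\ref{Anthony'sLemma1} with a suitably chosen ergodic single transformation.

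Pushing $\mu$ forward by the restriction map $r:\hat I\to \hat J$, Lemma~\ref{fromItoJ} gives that $r_*\mu$ is ergodic $\uk$-invariant and has infinite support, so by hypothesis $r_*\mu = \lambda_{\hat J}$. Next I would symmetrize $\mu$ against the finite subgroup $J^\perp = \ker r \le \hat I$: for each $\kappa \in J^\perp$ let $T_\kappa$ denote translation by $\kappa$ and set
\[
\nu := \frac{1}{|J^\perp|}\sum_{\kappa \in J^\perp}(T_\kappa)_*\mu.
\]
From the identity $u\cdot(\kappa\chi) = (u\cdot\kappa)(u\cdot\chi)$ one obtains $T_u\circ T_\kappa = T_{u\cdot\kappa}\circ T_u$ and sees that $\uk$ permutes $J^\perp$; combined with the $\uk$-invariance of $\mu$ this gives $(T_u)_*\nu = \nu$. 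The measure $\nu$ is also $J^\perp$-invariant by construction, and since $r\circ T_\kappa = r$, its pushforward is still $\lambda_{\hat J}$.

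The crucial step is to verify $\nu = \lambda_{\hat I}$. The Haar measure $\lambda_{\hat I}$ is another $J^\perp$-invariant probability measure on $\hat I$ with $r_*\lambda_{\hat I} = \lambda_{\hat J}$, so it suffices to show that these two properties uniquely determine a measure. Covering $\hat J$ by finitely many of the neighborhoods $N_\alpha$ from Lemma~\ref{partition} and refining to a Borel partition, each $r^{-1}(N_\alpha)$ splits as $\bigsqcup_{\kappa\in J^\perp} h_{\alpha,\kappa}(N_\alpha)$ with the $h_{\alpha,\kappa}$ mutual $J^\perp$-translates; by $J^\perp$-invariance both $\nu$ and $\lambda_{\hat I}$ assign mass $\tfrac{1}{|J^\perp|}\lambda_{\hat J}(E)$ to each $h_{\alpha,\kappa}(E)$ with $E\subseteq N_\alpha$. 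This yields $\nu = \lambda_{\hat I}$, hence $\mu \le |J^\perp|\lambda_{\hat I}$, in particular $\mu \ll \lambda_{\hat I}$.

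Finally, to apply Lemma~\ref{Anthony'sLemma1} I need a single transformation that makes $\lambda_{\hat I}$ ergodic and leaves $\mu$ invariant. The hypothesis is vacuous unless $\uk$ is infinite, so fix any non-torsion unit $u_0$; each archimedean embedding $\sigma_k:K\hookrightarrow \C$ is an injective ring map, so none of the values $\sigma_k(u_0)$ can be a root of unity, and by Proposition~\ref{diagonalization} these values are precisely the eigenvalues of $\rho_I(u_0)\in GL_d(\Z)$. Hence $\rho_I(u_0)$ is an ergodic toral automorphism of $\hat I$ for $\lambda_{\hat I}$, and Lemma~\ref{Anthony'sLemma1} applied with $T = \rho_I(u_0)$ and $\lambda = \lambda_{\hat I}$ then yields $\mu = \lambda_{\hat I}$. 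I expect the main obstacle to lie in the identification $\nu = \lambda_{\hat I}$, which requires the explicit local product structure of $r$ provided by Lemma~\ref{partition}; everything else is routine assembly of the preceding results.
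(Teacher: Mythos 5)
Your argument is correct, and it reaches the same two milestones as the paper's proof --- first $\mu \ll \lambda_{\hat I}$ via the local product structure of Lemma~\ref{partition}, then $\mu = \lambda_{\hat I}$ via Lemma~\ref{Anthony'sLemma1} --- but the route to absolute continuity is genuinely different. The paper argues directly with null sets: for a small piece $B$ on which $r$ is injective, $r\inv(r(B))$ is a disjoint union of $|I/J|$ pieces of equal $\lambda_{\hat I}$-mass, so $\lambda_{\hat I}(B)=0$ forces $\lambda_{\hat J}(r(B))=0$, hence $\mu(r\inv(r(B)))=0$ and $\mu(B)=0$; a compactness argument then globalizes this. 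You instead average $\mu$ over the deck group $J^\perp=\ker r$, check that the symmetrized measure $\nu$ is still $\uk$-invariant (using that $\uk$ permutes $J^\perp$) and pushes forward to $\lambda_{\hat J}$, and identify $\nu=\lambda_{\hat I}$ from Lemma~\ref{partition}; this buys the stronger quantitative conclusion $\mu\leq |I/J|\,\lambda_{\hat I}$, i.e.\ a bounded Radon--Nikodym derivative, rather than bare absolute continuity. You also make explicit a point the paper leaves implicit: Lemma~\ref{Anthony'sLemma1} is stated for a single transformation, and your choice of $T=\rho_I(u_0)$ for a non-torsion unit $u_0$, together with the observation that its eigenvalues $\sigma_k(u_0)$ cannot be roots of unity, correctly supplies the needed ergodic map (the degenerate case where $\uk$ is finite is indeed harmless, since then no ergodic invariant measure has infinite support on either torus). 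Both proofs are comparable in length; yours is arguably cleaner in that the measure identification is done once, globally, for the symmetrized measure.
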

\begin{proof}
Let $\mu$ be an ergodic $\ok^*$-invariant probability measure on $\hat I$ with infinite support.  By \lemref{fromItoJ}, $\mu \circ r\inv$ is an ergodic $\ok^*$-invariant probability measure on $\hat J$ with infinite support, and so by assumption must equal $\lambda_{\hat J}$. In particular, $\lambda_{\hat I} \circ r\inv = \lambda_{\hat J}$.

Since $\hat J$ is compact, the open cover $\{N_\gamma: \gamma \in \hat J\}$ given by the sets constructed in \lemref{partition}
has a finite subcover, that is, there exist $\gamma_1, \ldots, \gamma_n \in \hat J$ so that $\hat J = \bigcup\limits_{k=1}^n N_{\gamma_k}$, where  $N_{\gamma_k}$ is a neighborhood of  $\gamma_k \in \hat J$ satisfying the conditions stated in \lemref{partition}, with corresponding maps $h_{\gamma_k}^{(j)}$,  for $1 \le j \le |I/J|$ and  $1 \le k \le n$. 

We will first show that if $B \subseteq \hat I$ is such that $r|_B$ is a homeomorphism  with $r(B) \subseteq N_{\gamma_k}$ for some $k$, and if $\lambda_{\hat I}(B) = 0$, then  $\mu(B) = 0$. Suppose $B$ is such a set and  $\lambda_{\hat I}(B)=0$. By part (3) of \lemref{partition}, $r\inv(r(B)) = \bigsqcup\limits_{j=1}^{|I/J|} h_{\gamma_k}^{(j)}(r(B))$, so $r\inv(r(B))$ is a disjoint union of $|I/J|$ sets, all having the same measure under $\lambda_{\hat I}$. Moreover, there exists some $1 \le j \le |I/J|$ such that $h_{\gamma_k}^{(j)}(r(B)) = B$, because the $h_{\gamma_k}^{(j)}$'s form a complete set of local inverses for $r$, and $r$ is injective on $B$. Putting these together yields
 \[\lambda_{\hat I}(r\inv(r(B))) = |I/J| \lambda_{\hat I}(h_{\gamma_k}^{(j)}(r(B))) = |I/J|\lambda_{\hat I}(B) =0.\] 
 Since $\mu \circ r\inv = \lambda_{\hat J} = \lambda_{\hat I} \circ r\inv$, this implies that $\mu(r\inv(r(B))) = 0$ as well, and since $B \subseteq r\inv(r(B))$, we have that $\mu(B) = 0$.

Now, since $r: \hat I \to \hat J$ is a covering map, for each $\chi \in \hat I$, there exists an open neighbourhood $U_\chi$ of $\chi$ such that $r|_{U_\chi}$ is a homeomorphism. Let $1 \le k \le n$ be such that $r(\chi) \in N_{\gamma_k}$, and let $W_\chi:=U_\chi \cap r\inv(N_{\gamma_k})$. This forms another open cover of $\hat I$, and so by compactness of $\hat I$, there exists a finite subcover $W_1, \ldots, W_m$.

Finally, let $A \subseteq \hat I$ be such that $\lambda_{\hat I}(A)=0$. Then $A \cap W_i$ is a set on which $r$ acts as a homeomorphism, and there exists $1 \le k \le n$ such that $r(A \cap W_i) \subseteq N_{\gamma_k}$. Thus, by the above, we conclude $\mu(A \cap W_i) = 0$ for all $1 \le i \le m$. Since these sets cover $A$, we have that $\mu(A)=0$, and hence $\mu \ll \lambda_{\hat I}$, as desired. By \lemref{Anthony'sLemma1} it follows that $\mu = \lambda_{\hat I}$.
\end{proof}

\begin{proof}[Proof of \proref{oneidealsuffices}:]
Suppose $J$ is an integral ideal such that the only $\uk$-invariant probability measure on $\hat J$
having an infinite orbit is normalized Haar measure. By \lemref{liftingF} applied to the inclusion $J\subset \ok$, the only ergodic $\ok^*$-invariant probability measure on $\hatok$ with infinite support is normalized Haar measure.
 
 Suppose now $I \subseteq \ok$ is an arbitrary integral ideal.
Since the ideal class group is finite, a power of $I$ is principal and thus we may choose $q \in \O^\times_K$ such that $q \ok \subseteq I$. The action of $\ok^*$ on $\hatok$ is conjugate to the action of $\ok^*$ on $\widehat{q\ok}$, and so the only ergodic $\ok^*$-invariant probability measure on $\widehat{q\ok}$ with infinite support is normalized Haar measure. Thus, by  \lemref{liftingF} again with $\widehat{q\ok} \subseteq \hat I$, we conclude that the only ergodic $\ok^*$-invariant probability measure on $\hat I$ is $\lambda_{\hat I}$.
\end{proof}
 
In order to understand the situation for number fields with unit rank higher than $1$,
we review in the next section the topological version of the problem
of ergodic invariant measures, namely, the classification of closed invariant sets.

\section{Berend's theorem and number fields}\label{berendsection}
An elegant generalization to  higher-dimensional tori of Furstenberg's characterization \cite[Theorem IV.1]{F}  
of closed invariant sets for semigroups of  transformations of the circle was obtained by Berend  \cite[Theorem 2.1]{B}.
The fundamental question investigated by Berend is whether an infinite invariant set is necessarily dense, and his original formulation is for semigroups of endomorphisms of a torus. Here we are interested in the specific situation arising from an algebraic number field $K$ in which the units $\uk$ act by automorphisms on $\hat J$ for integral ideals $J \subseteq \ok$ representing each ideal class, so we paraphrase Berend's Property ID  for the special case of a group action on a compact space.
\begin{definition} (cf. \cite[Definition 2.1]{B}.)
Let $G$ be a group acting on a compact space $X$ by homeomorphisms. 
We say that the action $G\acts X$ {\em satisfies the ID property}, or that it
has the {\em infinite invariant dense property},  
if the only closed infinite $G$-invariant subset of $X$ is $X$ itself.
\end{definition}
The first observation is a topological version of the measure-theoretic solidarity proved in \proref{oneidealsuffices};
namely, if $K$ is a given number field, then the action  $\uk \acts \hat J$ has the ID property either for all integral ideals  $J$,  or for none.  
 
\begin{proposition}\label{IDforallornone}
Suppose $K$ is an algebraic number field, and let $J$ be an ideal in $\ok$. Then the action of $\uk$ on $\hat J$ is ID if and only if the action of $\uk$ on $\hat{\ok}$ is ID.
\end{proposition}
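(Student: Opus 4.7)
The plan is to exploit the continuous, $\uk$-equivariant restriction map $r: \hat{\ok} \to \hat J$ dual to the inclusion $J \hookrightarrow \ok$, whose kernel $J^\perp := \{\kappa \in \hat{\ok}: \kappa|_J \equiv 1\} \cong \widehat{\ok/J}$ is a finite subgroup of $\hat{\ok}$ that is $\uk$-invariant because $J$ is an ideal. Thus $r$ is an open, surjective, $\uk$-equivariant, $|\ok/J|$-to-one covering between the two tori $\hat{\ok}$ and $\hat J$.

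One direction is straightforward. Assuming $\uk \acts \hat{\ok}$ has ID, take a closed, infinite, $\uk$-invariant $F \subseteq \hat J$; then $r^{-1}(F)$ is closed, $\uk$-invariant, and still infinite (the fibers of $r$ being finite), so the hypothesis forces $r^{-1}(F) = \hat{\ok}$, and surjectivity of $r$ yields $F = \hat J$.

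For the converse, assume $\uk \acts \hat J$ has ID and take a closed, infinite, $\uk$-invariant $F \subseteq \hat{\ok}$. Then $r(F)$ is closed (continuous image of a compact set), $\uk$-invariant, and infinite, so $r(F) = \hat J$. Equivalently, $J^\perp \cdot F = r^{-1}(\hat J) = \hat{\ok}$, so the finitely many translates $\{\kappa F : \kappa \in J^\perp\}$ cover $\hat{\ok}$. Translation invariance of normalized Haar measure $\lambda$ on $\hat{\ok}$ then forces $\lambda(F) \geq |J^\perp|^{-1} > 0$. When $\rank(\uk) \geq 1$, \proref{orbitsandisotropy} guarantees that $\lambda$ is ergodic for the $\uk$-action, so the $\uk$-invariance of $F$ together with $\lambda(F) > 0$ gives $\lambda(F) = 1$. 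Since Haar measure is strictly positive on every nonempty open subset of $\hat{\ok}$ and $F^c$ is open, we conclude $F^c = \emptyset$ and hence $F = \hat{\ok}$. The imaginary quadratic case ($\rank(\uk) = 0$) is degenerate and can be handled separately: both sides of the equivalence fail, since proper closed infinite $\uk$-invariant subsets (such as $\uk$-invariant unions of ``coordinate'' lines) are easy to exhibit in both $\hat{\ok}$ and $\hat J$, so the iff holds vacuously.

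The main obstacle is the final step of the converse: the purely set-theoretic covering condition $J^\perp \cdot F = \hat{\ok}$ does not, by itself, rule out an intricate $F$ (say, a Cantor-like invariant set) that meets every fiber of $r$ without filling the whole torus. Ergodicity of Haar measure under the $\uk$-action—which in turn depends on the existence of a non-torsion unit in $\uk$—is the crucial input that upgrades ``positive measure'' to ``full measure'' and thereby closes the argument.
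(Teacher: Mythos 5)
Your proof is correct, and your forward direction (ID for $\uk\acts\hatok$ implies ID for $\uk\acts\hat J$, by pulling a proper closed infinite invariant set back through the finite-to-one restriction map $r$) is exactly the paper's. The converse is where you genuinely diverge. The paper stays purely topological: it chooses $q\in\ok^\times$ with $q\ok\subseteq J$, observes that the action of $\uk$ on $\widehat{q\ok}$ is conjugate to that on $\hatok$, and then runs the \emph{same} pullback argument for the inclusion $q\ok\subseteq J$, so both directions are instances of one lemma about inclusions of ideals. You instead push a closed infinite invariant $F\subseteq\hatok$ forward, use ID on $\hat J$ to get $r(F)=\hat J$, deduce that the finitely many $J^\perp$-translates of $F$ cover $\hatok$ so that $\lambda(F)\geq|J^\perp|^{-1}>0$, and then invoke ergodicity of Haar measure under $\uk$ (\proref{orbitsandisotropy}) together with strict positivity of Haar measure on nonempty open sets to force $F=\hatok$. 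Every step checks out, including the key point you flag: the covering condition alone is not enough, and ergodicity is what upgrades positive to full measure. The trade-off is that the paper's argument is uniform in the unit rank and uses no measure theory, whereas yours requires $\rank\uk\geq 1$ and hence the separate, vacuous treatment of the rank-zero fields ($\Q$ as well as the imaginary quadratics), which you correctly supply; in exchange you avoid the detour through an auxiliary principal ideal and its conjugacy with $\ok$.
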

\begin{proof}
Suppose first that $J_1 \subseteq J_2$ are ideals in $\ok$ and assume that the action of $\uk$ on $\hat{J}_2$ is ID.  The restriction map $r: \hat J_2 \to \hat J_1$ is $\uk$-equivariant, continuous, surjective, and has finite fibers. Thus, if $E$ were a closed, proper, infinite $\uk$-invariant subset of $\hat{J}_1$, then $r\inv(E)$ would be a closed, proper, infinite $\uk$-invariant subset of $\hat{J}_2$, contradicting the assumption that  the action of $\uk$  on $\hat{J}_2$ is ID. So  no such set $E$ exists, proving that the action of $\uk$ on $\hat{J}_1$ is also ID.

In particular, if the action of $\uk$  on $\hat{\ok}$ is ID, then the action on $\hat{J}$ is also ID for every integral ideal $J\subset \ok$. For the converse, recall that, as in the proof of \proref{oneidealsuffices}, there exists an integer $q \in \ok^\times$ such that $q\ok \subseteq J$, so we may apply the preceding paragraph to this inclusion. Since the action of $\uk$ on $\widehat{q\ok}$ is conjugate to that on $\hatok$, this  completes the proof.
\end{proof}

In order to decide for which number fields the action
of units on the integral ideals is ID, we need to recast Berend's necessary and sufficient conditions
in terms of properties of the number field.
Recall that, by definition, a number field is called a {\em complex multiplication (or CM)  field} if it is a totally imaginary quadratic extension of a totally real subfield. These fields were studied by Remak \cite{rem}, who observed that they are exactly the fields that have a {\em unit defect}, in the sense that they contain a proper subfield $L$ 
with the same unit rank. 

\begin{theorem}\label{berend4units}
 Let $K$ be an algebraic number field and let $J$ be an ideal in $\ok$. 
 The action of $\uk$ on $\hat J$ is ID if and only if $K$ is not a CM field and $\rank \uk \geq 2$.
\end{theorem}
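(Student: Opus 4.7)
The plan is to reduce to $J=\ok$ via \proref{IDforallornone} and then match Berend's theorem \cite[Theorem 2.1]{B} against the matrix realization $\rho:\uk\to GL_d(\Z)$ of \proref{diagonalization}. Berend's ID criterion for a group $\Sigma\subset GL_d(\Z)$ acting on $\T^d$ consists of three conditions: (B1) some element of $\Sigma$ has no root-of-unity eigenvalue; (B2) on every common $\C$-eigenvector of $\Sigma$ the associated eigenvalue characters take two multiplicatively independent values; and (B3) no finite-index subgroup of $\Sigma$ preserves a proper rational subspace of $\Q^d\cong K$. By \proref{diagonalization} the common eigenvectors are indexed by the archimedean places of $K$ and the eigenvalue characters are precisely the embeddings $u\mapsto\sigma_k(u)$, so each condition translates directly into arithmetic information about $\uk$ and $K$; the core of the argument is then to show that (B1)\&(B2)\&(B3) is equivalent to ``$K$ not CM and $\rank\uk\geq 2$''.

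For (B1), the eigenvalues of $\rho(u)$ are the $\sigma_k(u)$, and they are all roots of unity iff $u$ is torsion, so (B1) is equivalent to $\rank\uk\geq 1$. For (B2), each embedding $\sigma_k:K\to\C$ is injective on $\uk$, so $\sigma_k(\uk)$ is a subgroup of $\C^*$ of $\Z$-rank $\rank\uk$, and two units whose images generate a rank-$2$ subgroup modulo torsion are multiplicatively independent in $\C^*$, so (B2) is equivalent to $\rank\uk\geq 2$. For (B3), any finite-index $U\subseteq\uk$ generates a subring $\Q[U]\subseteq K$ which is a finite-dimensional $\Q$-integral domain, hence a subfield $\Q(U)$; from $U\subseteq\OO_{\Q(U)}^*\subseteq\uk$ with $\rank U=\rank\uk$ one obtains $\rank\OO_{\Q(U)}^*=\rank\uk$. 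By Remak's characterization of unit defect \cite{rem}, $K$ is non-CM iff no proper subfield of $K$ has the same unit rank as $K$, equivalently iff $\Q(U)=K$ for every finite-index $U\subseteq\uk$; conversely, when $K$ is CM with totally real subfield $L$, the finite-index subgroup $U:=\ul$ satisfies $\Q[U]=L\subsetneq K$, and $L$ itself is a $U$-invariant proper $\Q$-subspace of $K$. Since a finite-index $U$ preserves some proper rational subspace iff $\Q[U]\subsetneq K$ (witnessed by $\Q[U]$ itself), (B3) is equivalent to $K$ being non-CM, and Berend's theorem then yields the stated iff.

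The main obstacle, and what makes the finite-index refinement of (B3) essential, is the CM phenomenon: $\uk$ itself does not stabilize the $\Q$-subspace $L\subset K$ for CM $K$, yet its finite-index subgroup $\ul$ does, and the dual geometric manifestation is that the $\uk$-orbit of the annihilator $\OO_L^\perp:=\{\chi\in\hat\ok : \chi|_{\OO_L}=1\}$ is a finite union of $(d/2)$-dimensional subtori, a proper, closed, infinite, $\uk$-invariant subset of $\hat\ok$. Recognizing that Remak's non-CM condition is precisely the arithmetic avatar of Berend's hypothesis (B3) is the heart of the theorem.
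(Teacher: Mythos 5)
Your overall architecture matches the paper's: reduce to $J=\ok$ via \proref{IDforallornone}, then translate Berend's criterion into arithmetic of $K$ using the diagonalization of $\rho$, with Remak's characterization of CM fields doing the number-theoretic work. Your analysis of (B3) --- that a finite-index $U\leq\uk$ stabilizes a proper rational subspace iff $\Q[U]\subsetneq K$, and that Remak makes this equivalent to $K$ being CM --- is essentially the content of the paper's \proref{unitdefect} and is sound. The problem is that the three conditions (B1)--(B3) you attribute to \cite[Theorem 2.1]{B} are not Berend's conditions. Berend requires: (i) a single \emph{totally irreducible} element, i.e.\ one unit $u$ such that the characteristic polynomial of $\rho(u^n)$ is irreducible over $\Q$ for \emph{every} $n$; (ii) quasi-hyperbolicity: for every common eigenvector there is an element whose eigenvalue there has modulus strictly greater than $1$; and (iii) non-virtual-cyclicity. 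Since you verify a different set of conditions and offer no argument that yours are equivalent to Berend's, the appeal to Berend's theorem does not go through as written.

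Two of the discrepancies are substantive. First, your (B2) (two multiplicatively independent eigenvalue characters on each eigenline) does not imply Berend's (ii): two multiplicatively independent complex numbers can both lie on the unit circle, so you still must rule out an archimedean embedding $\sigma$ with $\sigma(\uk)\subseteq\{|z|=1\}$. That is exactly the paper's \lemref{friday}, a genuine page of number theory (it uses \proref{unitdefect} and the classification of fields whose units are all on the unit circle), and your proposal omits it entirely. Second, your (B1) (an element with no root-of-unity eigenvalue, i.e.\ an ergodic element) together with the group-level irreducibility (B3) is not literally Berend's (i); you need to produce one unit $u$ with $\Q(u^k)=K$ for all $k\geq 1$, so that every power has irreducible characteristic polynomial. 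Your Remak argument contains the raw material for this (it is \proref{unitdefect} plus the finite-subfield pigeonhole of \lemref{99percent}), but you never assemble it into the single totally irreducible element that Berend's hypothesis demands. The forward direction has the mirror-image issue: from ID you must extract non-CM from Berend's actual condition (i) rather than from your (B3).
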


For the proof  we shall need a few
number theoretic facts. We believe these are known but we include the
relatively straightforward proofs below for the convenience of the reader.

\begin{lemma}\label{99percent}
Suppose $\mathcal F$ is a finite family of subgroups of $\Z^d$ such that $\rank(F) < d$ for every $F\in \mathcal F$. 
Then there exists $m\in \Z^d$ such that $m+F$ is nontorsion in $\Z^d/F$ for every $F \in \mathcal F$.
\end{lemma}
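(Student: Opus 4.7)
The plan is to reduce the statement to the fact that $\Z^d$ cannot be covered by finitely many subgroups of rank strictly less than $d$, and then to produce the required $m$ by avoidance.

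First I would unpack the conclusion: for a subgroup $F \leq \Z^d$, the coset $m+F$ is torsion in $\Z^d/F$ if and only if some positive integer multiple $km$ lies in $F$, equivalently $m$ belongs to the \emph{saturation}
\[
F^{\mathrm{sat}} := (F \otimes_{\Z} \Q) \cap \Z^d,
\]
which is itself a subgroup of $\Z^d$ of the same rank as $F$. So the task reduces to finding $m \in \Z^d$ that lies in none of the saturations $F^{\mathrm{sat}}$ for $F \in \mathcal{F}$.

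Next I would argue that $\Z^d \setminus \bigcup_{F \in \mathcal F} F^{\mathrm{sat}}$ is nonempty. Each $F^{\mathrm{sat}} \otimes \Q$ is a proper $\Q$-linear subspace of $\Q^d$ since $\rank F^{\mathrm{sat}} = \rank F < d$, so each is annihilated by some nonzero rational linear functional $\ell_F \in (\Q^d)^*$, which we may clear denominators to take in $(\Z^d)^*$. Thus $F^{\mathrm{sat}} \subseteq \ker \ell_F$, and the union $\bigcup_F F^{\mathrm{sat}}$ is contained in the union of finitely many proper rational hyperplanes. Since a finite union of proper subspaces of $\Q^d$ over the infinite field $\Q$ never covers $\Q^d$, there exists $v \in \Q^d$ with $\ell_F(v) \neq 0$ for every $F \in \mathcal{F}$; clearing denominators yields $m \in \Z^d$ with $\ell_F(m) \neq 0$ for every $F$, hence $m \notin F^{\mathrm{sat}}$ for every $F$. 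This $m$ is the desired element.

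There is really no serious obstacle: the only thing to be careful about is the passage from "coset is torsion" to "$m$ lies in the saturation", which is immediate from the definition of torsion together with the fact that $F^{\mathrm{sat}}/F$ is precisely the torsion subgroup of $\Z^d/F$. The rest is standard avoidance of a finite union of proper subspaces over an infinite field.
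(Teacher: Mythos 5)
Your proof is correct and follows essentially the same route as the paper's: both reduce the torsion condition to membership of $m$ in the proper linear span of $F$ (your saturation $F^{\mathrm{sat}}$ plays the role of the paper's real span $S_F$), and then avoid the finite union of proper subspaces of a vector space over an infinite field. The only cosmetic difference is that you work over $\Q$ with annihilating linear functionals, while the paper works over $\R$, picks a rational point in the nonempty open complement of $\bigcup_F S_F$, and clears denominators.
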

\begin{proof}
Recall that for each subgroup $F$ there exists a basis 
$\{n^F_j\}_{j= 1, 2, \ldots , d}$ of $\Z^d$ and integers $a_1, a_2, \ldots, a_{\rank(F)}$ 
such that 
\[
F = \textstyle\big\{ \sum_{i=1}^{\rank(F)} k_i n^F_i:\  k_i \in a_i\Z, \ 1\leq i \leq \rank(F)\big\}.
\]

The associated vector subspaces $S_F := \lsp_\R\{n^F_1, \ldots, n^F_{\rank(F)}\} $ of $\R^d$ are proper and closed
so $\R^d \setminus \cup_F S_F$ is a nonempty open set, see e.g. \cite[Theorem 1.2]{rom}. Let $r$ be a  point in 
$\R^d \setminus \cup_F S_F$ with rational coordinates. If $k$ denotes the l.c.m. of all the denominators of the coordinates of $r$, then $m := kr \in  \Z^d$ and its image $m+F \in \Z^d/F$ is of infinite order for every $F$ because $m \notin S_F$.
\end{proof}

\begin{proposition}\label{unitdefect}
 Let $K$ be an algebraic number field. Then there exists a unit $u\in \uk$ such that
 $K = \Q(u^k)$ for every $k\in \nx$ if and only if $K$ is not a CM field.
\end{proposition}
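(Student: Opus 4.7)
My plan is to prove the two directions of the biconditional separately, leveraging Remak's characterization recalled just before the theorem: $K$ is CM if and only if some proper subfield $L\subsetneq K$ satisfies $\rank(\OO_L^*)=\rank(\uk)$.

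The easy direction is ``$K$ CM $\Rightarrow$ no such unit exists''. Suppose $L\subsetneq K$ is a proper subfield with $\rank(\OO_L^*)=\rank(\uk)$. Then $\OO_L^*\subseteq \uk$ are finitely generated abelian groups of equal rank, so the quotient $\uk/\OO_L^*$ is finite, of some order $N$. Every $u\in \uk$ then satisfies $u^N\in \OO_L^*\subseteq L\subsetneq K$, hence $\Q(u^N)\subsetneq K$, so no unit meets the required condition.

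For the harder direction, assume $K$ is not CM and set $n=\rank(\uk)$. First I would use that $K/\Q$ is finite and separable, so the primitive element theorem guarantees only finitely many intermediate fields; denote the proper subfields by $L_1,\ldots,L_p$. By the non-CM hypothesis, $\rank(\OO_{L_i}^*)<n$ for each $i$. Next I would push the problem down to $\uk/W_K\cong \Z^n$: fix such an isomorphism, and let $F_i$ be the image of $\OO_{L_i}^*$ under the composition $\OO_{L_i}^*\hookrightarrow \uk\twoheadrightarrow \uk/W_K$. Since $W_K\cap \OO_{L_i}^*=W_{L_i}$ is the torsion part of $\OO_{L_i}^*$, we have $\rank(F_i)=\rank(\OO_{L_i}^*)<n$, so the finite family $\{F_1,\ldots,F_p\}$ meets the hypotheses of \lemref{99percent}. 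Applying that lemma yields an $m\in \Z^n$ such that $km\notin F_i$ for every $k\in \Z\setminus\{0\}$ and every $i$. I then lift $m$ to a unit $u\in \uk$; for any $k\ne 0$, the image of $u^k$ in $\uk/W_K$ equals $km\notin F_i$, so $u^k\notin W_K\cdot \OO_{L_i}^*$, and in particular $u^k\notin \OO_{L_i}^*$.

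The final step is to deduce $\Q(u^k)=K$ for all $k\in\nx$: if $\Q(u^k)$ were a proper subfield of $K$, it would equal some $L_i$, and then $u^{\pm k}\in L_i\cap \ok=\OO_{L_i}$ would give $u^k\in \OO_{L_i}^*$, contradicting the choice of $u$. I expect the main obstacle to be packaging the ingredients cleanly for \lemref{99percent}: one must translate ``non-CM'' via Remak into the rank condition on the $F_i$, be sure that the finiteness of the family is guaranteed by the primitive element theorem, and handle the torsion subgroup $W_K$ carefully when passing between $\uk$ and $\Z^n$, so that a ``nontorsion'' conclusion in $\Z^n/F_i$ really forces $u^k\notin \OO_{L_i}^*$ rather than merely $u^k\notin W_K\cdot \OO_{L_i}^*$ up to roots of unity.
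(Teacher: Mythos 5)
Your proposal is correct and follows essentially the same route as the paper: Remak's characterization of CM fields, finiteness of the quotient $\uk/\OO_L^*$ for the easy direction, and finiteness of the set of proper subfields together with \lemref{99percent} for the converse. The only difference is that you make explicit the passage to $\uk/W_K\cong\Z^n$ before invoking \lemref{99percent} (which is stated for subgroups of $\Z^d$), a step the paper leaves implicit; this is a welcome point of care rather than a divergence in method.
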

\begin{proof}Assume first $K$ is not a CM field. Then $\rank \uf < \rank \uk$ for every proper 
subfield $F$ of $K$. Since there are only finitely many proper subfields $F$ of $K$,  \lemref{99percent}
gives a unit $u\in \uk$ with nontorsion image in $\uk/\uf$ for every $F$. Thus $u^k \notin F$  for
every proper subfield $F$ of $K$ and  every $k\in \N$.

Assume now $K$ is a CM field, and let $F$ be a totally real subfield with the same unit rank as $K$ \cite{rem}.
Then the quotient $\uk/\uf$ is finite and there exists a fixed integer $m$ such that 
$u^m \in F$ for every $u\in \uk$.
\end{proof}

\begin{lemma} \label{friday}
Let $k$ be an algebraic number field with $\rank \uk \geq 1$. Then for every embedding 
$\sigma: k \to \C$, there exists $u \in \uk $ such that $|\sigma(u)| > 1$. 
\end{lemma}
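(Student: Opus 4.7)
The plan is to invoke Dirichlet's unit theorem via the logarithmic embedding and argue by contradiction, leveraging the group structure of $\uk$. Concretely, I would fix representatives $\sigma_1,\ldots,\sigma_r$ of the real embeddings and $\sigma_{r+1},\ldots,\sigma_{r+s}$ of the conjugate pairs of complex embeddings, and form the map $L:\uk\to\R^{r+s}$ given by $L(u) = (\eps_1\log|\sigma_1(u)|,\ldots,\eps_{r+s}\log|\sigma_{r+s}(u)|)$ with $\eps_i=1$ for real places and $\eps_i=2$ for complex places. Dirichlet's theorem then asserts that $L(\uk)$ is a lattice of rank $r+s-1 = \rank \uk$ inside the hyperplane $H := \{x\in\R^{r+s}: \textstyle\sum_i x_i = 0\}$, which by hypothesis has dimension at least $1$.

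Next I would observe that any embedding $\sigma:k\to\C$ has the same absolute value as some $\sigma_i$ (itself, or its complex conjugate), so it suffices to produce, for each index $i$, a unit $u\in\uk$ with $|\sigma_i(u)|>1$. For this I would argue by contradiction: if $|\sigma_i(u)|\leq 1$ for every $u\in\uk$, then applying the same inequality to $u^{-1}$ forces $|\sigma_i(u)|=1$ for all $u$, so the $i$-th coordinate of $L(u)$ vanishes identically. But this places $L(\uk)$ inside $H\cap\{x_i=0\}$, a subspace of $H$ of dimension $r+s-2$, which contradicts the fact that $L(\uk)$ is a lattice of full rank $r+s-1$ in $H$.

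There is no real obstacle here; the argument is a direct unpacking of Dirichlet's unit theorem combined with the fact that $\uk$ is closed under inversion. The only minor points to double-check are that the equivalence of $|\sigma_i(u)|=1$ with the vanishing of the $i$-th coordinate of $L(u)$ is unaffected by the factor of $2$ at complex places, and that the hypothesis $\rank \uk \geq 1$ is precisely what ensures the dimension inequality $r+s-2 < r+s-1$ yielding the contradiction.
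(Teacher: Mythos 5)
Your argument is correct, but it is genuinely different from the one in the paper. You reduce to the representatives $\sigma_1,\dots,\sigma_{r+s}$ of the archimedean places, note that $\uk$ being a group forces $|\sigma_i(u)|\le 1$ for all $u$ to upgrade to $|\sigma_i(u)|=1$ for all $u$, and then observe that the vanishing of the $i$-th coordinate of the logarithmic embedding would confine the unit lattice to a subspace of $H$ of dimension $r+s-2$, contradicting Dirichlet's theorem that it has rank $r+s-1\ge 1$ (the hypothesis $\rank\uk\ge 1$ also guarantees $r+s\ge 2$, so that $\{x_i=0\}$ really cuts $H$ down by one dimension). This is a clean, self-contained deduction from Dirichlet's unit theorem alone. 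The paper instead argues field-theoretically: assuming $\sigma(\uk)$ lies on the unit circle, it shows $\sigma(k)\cap\R=\Q$, rules out $k$ being a CM field, invokes Proposition~\ref{unitdefect} to produce a unit $u$ with $k=\Q(u)$, and then uses $|u|=1$ to force $k$ to be an imaginary quadratic field, contradicting $\rank\uk\ge 1$. The paper's route has the advantage of reusing machinery (Remak's unit-defect characterization via Proposition~\ref{unitdefect}) that it needs elsewhere for Berend's conditions, whereas yours is shorter, more elementary, and independent of the CM discussion; either proof is complete.
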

\begin{proof}
Assume for contradiction that $\sigma$ is an embedding of $k$ in $\C$ 
such that $\sigma(\uk ) \subseteq \{z \in \C: |z|=1\}$. Let $K = \sigma(k)$ and let $U_K = \sigma(\uk )$.
Then $K \cap \R$ is a real subfield of $K$ with $ U_{K\cap \R} = \{\pm1\}$, so $K \cap \R= \Q$.
Also $K \cap \R$ is the maximal real subfield of $K$, and since  
we are assuming $\rank \uk \geq 1$, $K$ cannot be a CM field. 
To see this, suppose that $k$ were CM. Let $\ell \subseteq k$ be a totally real subfield such that $[k:\ell] = 2$. Since $\ell$ is totally real, $\sigma(\ell)\subseteq \R$, and since $K \cap \R= \Q$, it must be that $\sigma(\ell) = \Q$. Then $\ell = \Q$, so $k$ is quadratic imaginary, contradicting $\rank \uk \ge 1$. 

By \proref{unitdefect}, there exists $u \in U_K$  such that $K = \Q(u)$. Since $|u|=1$, we have that $\ov K =\Q(\ov u)  = \Q(u\inv)=  \Q(u) = K$, so $K$ is closed under complex conjugation. Write $u = a + ib$. Then $u + \ov u = 2a \in K \cap \R= \Q$, so $a \in \Q$. Thus, $K = \Q(u) = \Q(ib).$ Since $|u|=1$, $a^2+b^2=1$, and so we have that \[\Q(ib) \cong \Q(\sqrt{-b^2}) \cong \Q(\sqrt{a^2-1}) \cong \Q\left(\sqrt{\frac{m^2-n^2}{n^2}}\right) \cong \Q(\sqrt{m^2-n^2}),\] where $a = m/n \in \Q$. 

Thus, $K$ is a quadratic field. But it cannot be quadratic imaginary because $\rank U_K \ge 1$, and it cannot be quadratic real because all the units lie on the unit circle. This proves there can be no such embedding. 
\end{proof}

\begin{proof}
[Proof of \thmref{berend4units}]
By \proref{IDforallornone}, it suffices to prove the case $J = \ok$.
Let $d = [K:\Q]$ and recall that $\hatok \cong \T^d$. 
All we need to do is verify that 
Berend's necessary and sufficient conditions for ID \cite[Theorem 2.1]{B}, when interpreted
for the automorphic action of $\uk$ on $\hatok$,
characterize non-CM fields of unit rank $2$ or higher. Since the action of $\uk$ by linear toral automorphisms $\rho(u)$ with $u\in \uk$ is faithful by \cite[p. 729]{KKS}, Berend's conditions are:
 \begin{enumerate}
  \item (totally irreducible) 
 there exists a unit $u$ such that the characteristic polynomial of $\rho(u^n)$ is irreducible for all $n\in \N$;
 \item (quasi-hyperbolic)
 for every common eigenvector of $\{\rho(u): u\in \uk\}$, there is a unit $u\in \uk$ such that the corresponding eigenvalue 
 of $\rho(u)$ is outside the unit disc; and
 \item (not virtually cyclic) 
there exist units $u,v\in \uk$ such that if $m,n \in \N$ satisfy $\rho(u^m) = \rho(v^n)$, then   $m = n = 0$.
\end{enumerate}

Suppose first that the action of $\uk$ on $\ok$ is ID. By \cite[Theorem 2.1]{B} conditions (1) and (3) above hold,
i.e. the action of $\uk$ on $\ok$ is totally irreducible and not virtually cyclic. 
By \proref{unitdefect}, $K$ is not a CM field and since $\rho:\uk \to GL_d(\Z)$ is faithful,  
(3) is a restatement of $\rank\uk \geq 2$.

 Suppose now that $K$ is not CM and has unit-rank at least $2$.
 By  \proref{unitdefect}, there exists $u \in \uk$ such that $\Q(u^n) = K$ for every $n \in \N$. Hence the 
 minimal polynomial of $\rho(u^n)$ has degree $d$, 
and so it coincides with the characteristic polynomial. This proves that condition (1) holds, 
 i.e. the action of $\rho(u)$ is totally irreducible. We have already observed that condition 
 (3) holds iff the unit rank of $K$ is at least $2$, so it remains to see that the hyperbolicity condition (2) holds too.
In the simultaneous diagonalization of the matrix group $\rho(\uk)$, the diagonal entries 
 of $\rho(u)$ are the embeddings of $u$ into $\R$ or $\C$, see e.g. \cite[p.729]{KKS}. Then condition (2) follows from \lemref{friday}.
\end{proof}
\begin{remark}
Notice that for units acting on algebraic integers, Berend's hyperbolicity condition (2) 
is automatically implied by the rank condition (3).
\end{remark}

\begin{remark}
Since the matrices representing the actions of $\uk$ on $\hat J$ and on $\hat \ok$ are conjugate over $\Q$,  \proref{IDforallornone} can be derived from the implication  (1)$\implies$(3) in \cite[Proposition 2.1]{KKS}. We may also see that the matrices implementing the action on $\hat J$ and on $\hat \ok$ have the same sets of characteristic polynomials, so the questions of expansive eigenvalues (condition (2)) and of total irreducibility are equivalent for the two actions. The third condition is independent of whether we look at $\hat J$ or $\hat \ok$, so this yields yet another proof of Proposition \ref{IDforallornone}.
\end{remark}

 By \thmref{berend4units}, for each non-CM algebraic number field $K$ with 
unit rank at least $2$, the  action $\uk$ on $\hatok$, transposed as 
$\{\rho(u): u\in \uk\}$ acting on $\R^d/\Z^d$,
 is an example of an abelian toral automorphism group for which one may hope
 to prove that normalized Haar measure is the only ergodic invariant probability measure with infinite support. So it
 is natural to ask which groups of toral automorphisms arise this way.
A striking observation of Z. Wang \cite[Theorem 2.12]{ZW}, see also \cite[Proposition 2.2]{LW},
states that every finitely generated abelian group of  automorphisms of $\T^d$ 
that contains a totally irreducible element
and whose rank is maximal and greater than or equal to $2$ 
arises, up to conjugacy, from a finite index subgroup
of units acting on the integers of a non-CM field of degree $d$ and unit rank at least $2$, cf.
\cite[Condition 1.5]{ZW}.
We wish next to give a proof of the converse, which was also stated in \cite{ZW}.
\begin{proposition} \label{ZWclaim} Suppose $G$ is an abelian subgroup of $\sld(\Z)$
satisfying \cite[Condition 2.8]{ZW}. Specifically, suppose there exist 
\begin{itemize}
\item a non-CM number field $K$ of degree $d$ and unit rank at least $2$;
\item an embedding $\phi:G \to \uk$ of $G$ into a finite index subgroup of $\uk$;
\item a co-compact lattice $\Gamma$ in $K\subset K\otimes_\Q \R\cong \R^d$ invariant under multiplication by $\phi(G)$; and
\item a linear isomorphism $\psi: \R^d \to K\otimes_\Q \R\cong \R^d$ mapping $\Z^d$ onto $\Gamma$
that intertwines 
  the actions $G\acts\R^d$ and $\phi(G) \acts (K\otimes_\Q \R)/\Gamma$.
\end{itemize}
 
  Then  $G $ satisfies \cite[Condition 1.5]{ZW}, namely
  \begin{enumerate}
  \item $\rank(G)\geq 2$;
  \item the action $g\acts \R^d/\Z^d \cong \T^d$ is totally irreducible for some $g\in G$;
  \item $\rank G_1 = \rank G$ for each abelian subgroup $G_1 \subset\sld(\Z)$  containing~$G$.
  \end{enumerate}
\end{proposition}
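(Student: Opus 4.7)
The plan is to verify the three conditions listed in the statement one at a time, with condition (3) being the main step.

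Condition (1) is immediate: $\phi:G\hookrightarrow\uk$ has image of finite index, so $\rank G = \rank \phi(G) = \rank \uk \geq 2$ by hypothesis.

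For condition (2), I would use the non-CM assumption through \proref{unitdefect}, which supplies a unit $u\in\uk$ with $\Q(u^k)=K$ for every $k\in\nx$. Since $\phi(G)$ has finite index in $\uk$, some power $u^m$ lies in $\phi(G)$; set $g:=\phi\inv(u^m)\in G$. Through $\psi$, the action of $g$ on $\Q^d$ is conjugate to multiplication by $u^m$ on $K$, so the characteristic polynomial of $g$ equals the $\Q$-minimal polynomial of $u^m$, which has degree $d$ and is irreducible by \proref{unitdefect}. Applying the same reasoning to every power $g^n$, which corresponds under $\phi$ to $u^{mn}$, and invoking $\Q(u^{mn})=K$, we conclude that the characteristic polynomial of $g^n$ is irreducible for every $n\in\nx$; that is, $g$ acts totally irreducibly on $\R^d/\Z^d$.

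Condition (3) is the substance. Let $G_1\subseteq\sld(\Z)$ be any abelian subgroup containing $G$; then every element of $G_1$ commutes with $g$. Since the characteristic polynomial of $g$ is irreducible of degree $d$ and is therefore equal to its minimal polynomial, the centralizer of $g$ in $M_d(\Q)$ is the commutative subalgebra $\Q[g]$, which is a field isomorphic to $K$ via $g\mapsto u^m$. Hence $G_1\subseteq\Q[g]\cap\sld(\Z)$. Transferring across $\psi$, this intersection consists of those $k\in K$ for which $k\Gamma=\Gamma$ and $N_{K/\Q}(k)=\det=1$; such elements are precisely the norm-one units of the order $\OO_\Gamma:=\{k\in K: k\Gamma\subseteq\Gamma\}$. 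By Dirichlet's unit theorem, valid for every order in $K$, the rank of $\OO_\Gamma^*$ equals $r+s-1=\rank\uk=\rank G$, and its norm-one subgroup (of index at most $2$) has the same rank. It follows that $\rank G_1\leq \rank G$; the reverse inequality is immediate from $G\subseteq G_1$.

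The main obstacle I anticipate is the bookkeeping in condition (3), specifically making precise the identification of $\Q[g]\cap\sld(\Z)$ with the norm-one units of $\OO_\Gamma$ through $\psi$. This relies on carefully exploiting the compatibility assumptions that $\psi$ is a linear isomorphism with $\psi(\Z^d)=\Gamma$ intertwining the actions; once these are traced through, the rank conclusion via Dirichlet's theorem for orders is standard.
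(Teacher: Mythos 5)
Your proposal is correct, and parts (1) and (2) follow the paper's argument essentially verbatim: finite index gives the rank, and \proref{unitdefect} plus the intertwining by $\psi$ gives a totally irreducible $g=\phi\inv(u^m)$. Where you genuinely diverge is part (3). The paper handles the maximality condition by invoking the construction of \cite[Proposition 2.13]{ZW}: applied to the irreducible element $g\in G_1$, it produces (up to automorphism) the same field $K$ and an embedding $\phi_1:G_1\to\uk$ extending $\phi$, whence $\rank G_1=\rank\phi_1(G_1)\le\rank\uk=\rank\phi(G)=\rank G$. You instead unpack what that citation conceals: since $g$ is non-derogatory with irreducible characteristic polynomial, its centralizer in $M_d(\Q)$ is the field $\Q[g]\cong K$, so $G_1\subseteq\Q[g]\cap\sld(\Z)$, which under $\psi$ is identified with the norm-one units of the multiplier order $\OO_\Gamma=\{k\in K:k\Gamma\subseteq\Gamma\}$, and Dirichlet's theorem for orders bounds the rank by $r+s-1=\rank\uk$. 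The two routes prove the same inequality; yours is self-contained and makes explicit exactly which finite-index subgroup of $\uk$ the extension $\phi_1$ lands in, at the cost of the bookkeeping you flag (verifying that the torus-level intertwining of the linear map $\psi$ lifts to an algebra isomorphism $\Q[g]\cong K$ carrying $\Z^d$-preserving, determinant-one elements to norm-one units of $\OO_\Gamma$ --- all of which checks out). The paper's version is shorter but leans on an external result whose proof is essentially your argument.
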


\begin{proof}
Suppose $K$ is a non-CM algebraic number field of degree $d$ with unit rank at least $2$,
and assume $G$ is a subgroup of $\sld(\Z)$ that satisfies the assumptions with respect to $K$. 
Part (1) of \cite[Condition 1.5]{ZW} is immediate, because $\phi(G)$ is of full rank in $\uk$.

By \proref{unitdefect},
there exists a  unit $u\in \uk$ such that the characteristic polynomial 
of $\rho(u^m)$ is irreducible over $\Q$ for all $m \in \N$. This is equivalent to the
  action of $u^m$ on $\hatok$ being irreducible for all $m \in \N$, see, e.g.  \cite[Proposition 3.1]{KKS}.
Since $\phi(G)$ is of finite index in $\uk$, there exists $N\in \N$ such that $u^N \in \phi(G)$. 
We claim that  $g := \phi\inv(u^N)$ is a totally irreducible element in $G\acts \R^d/\Z^d$.
To see this, it suffices to show that the characteristic polynomial of $g^k$ is irreducible over $\Q$
for every  positive integer $k$. Since the
linear isomorphism $\psi$  intertwines the actions $g^k\acts \T^d$ and $\rho(\phi(g))^k \acts  (K\otimes_\Q\R)/\Gamma$,
the characteristic polynomial of $g^k$ equals the characteristic polynomial of $\rho(\phi(g))^k = \rho(u^{kN})$, which is
irreducible because it coincides with the characteristic polynomial of $u^{kN}$ as an element of the ring $\ok$.
This proves part (2) of Condition 1.5.
 
Suppose now that $G_1$ is 
an abelian subgroup of $\sld(\Z)$ containing $G$ and 
apply the construction from \cite[Proposition 2.13]{ZW} 
(see also \cite{KlausS,EL1})  to the irreducible element $g\in G\subset G_1 \acts \T^d$. 
Up to an automorphism, the resulting number field arising from this construction is $K= \Q(u^N)$, 
and the embedding $\phi_1: G_1 \to \uk$ 
is an extension of $\phi:G\to \uk$. Since $\phi(G)\subset \phi_1(G_1) \subset \uk$
and $\phi(G)$ is of finite index in $\uk$,   
\[\rank (G_1) = \rank\phi_1(G_1) = \rank \uk = \rank \phi(G) = \rank G,\]
and this proves proves part (3) of Condition 1.5.
\end{proof}
As a consequence, we see that the action of  units on the algebraic integers of number fields are generic 
for group actions with Berend's ID property in the following sense, cf. \cite{ZW,LW}.
\begin{corollary}
If $G$ is a finitely generated abelian subgroup of $\sld(\Z)$ of torsion-free rank at least 2 that contains a totally irreducible element and is maximal among abelian subgroups of $\sld(\Z)$ containing $G$, then $G$ is conjugate to
a finite-index toral automorphism subgroup of the action of  $\uk \acts \hatok$ 
for a non-CM algebraic number field $K$ of degree $d$ and unit rank at least $2$.
\end{corollary}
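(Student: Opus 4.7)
The plan is to identify the three hypotheses on $G$ with Wang's \cite[Condition 1.5]{ZW} and then invoke his classification. Asking that $G$ be a finitely generated abelian subgroup of $\sld(\Z)$ of torsion-free rank at least $2$, containing a totally irreducible element, and maximal among abelian subgroups of $\sld(\Z)$ containing it, is exactly Condition 1.5. Wang's \cite[Theorem 2.12]{ZW}, see also \cite[Proposition 2.2]{LW}, then produces the datum described in \cite[Condition 2.8]{ZW} and used in the hypotheses of \proref{ZWclaim}: a non-CM number field $K$ of degree $d$ and unit rank at least $2$, an embedding $\phi:G\to \uk$ onto a finite-index subgroup, a co-compact lattice $\Gamma\subset K\otimes_\Q\R$ invariant under multiplication by $\phi(G)$, and a linear isomorphism $\psi:\R^d\to K\otimes_\Q\R$ sending $\Z^d$ onto $\Gamma$ and intertwining the two actions.

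The second step is to convert this description into the form stated in the corollary, namely conjugacy with a finite-index subgroup of $\uk\acts\hatok$. Clearing denominators, there is $q\in \ox$ with $q\Gamma\subseteq\ok$, and as in the proof of \proref{oneidealsuffices} the action of $\uk$ on $\widehat{q\Gamma}$ is conjugate over $\Q$ to its action on $\hatok$; so, after composing $\psi$ with multiplication by $q$, we may assume that $\psi$ carries $\Z^d$ onto a sublattice of $\ok$ of finite index. Passing to the dual via the identification $\hatok\cong \R^d/\Z^d$ and the matrix representation $\rho:\uk\to GL_d(\Z)$ of the discussion preceding \proref{diagonalization}, the intertwining property of $\psi$ becomes a $GL_d(\Q)$-conjugacy between $G$ and $\rho(\phi(G))$; after a base change this descends to an $\sld(\Z)$-conjugacy on a finite-index subgroup. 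Since $\phi(G)$ already has finite index in $\uk$, its image $\rho(\phi(G))$ is a finite-index subgroup of $\rho(\uk)$, which is precisely the toral automorphism group encoding $\uk\acts\hatok$.

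The main technical obstacle is the reconciliation of lattices. Wang's theorem yields a conjugacy to multiplication by units on an arbitrary $\phi(G)$-invariant lattice $\Gamma$, which is generally not $\ok$, while the corollary refers specifically to the action on $\hatok$. The key input is that any two $\uk$-invariant lattices in $K\otimes_\Q\R$ give rise to pairwise $\Q$-conjugate representations of $\uk$ in $GL_d(\Q)$, and that restricting to a suitable finite-index subgroup turns such a $\Q$-conjugacy into a $\Z$-conjugacy of the corresponding subgroups of $\sld(\Z)$. This is the same solidarity phenomenon that underlies \proref{IDforallornone} and \proref{oneidealsuffices}, and once it is in hand the corollary follows without further work.
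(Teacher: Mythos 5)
Your first paragraph is exactly what the paper does: the corollary carries no proof of its own and is presented as an immediate consequence of Wang's \cite[Theorem 2.12]{ZW} (which gives Condition 1.5 $\Rightarrow$ Condition 2.8) together with \proref{ZWclaim} (the converse implication), so matching the hypotheses to Condition 1.5 and citing Wang is the whole of the intended argument. One minor imprecision: the corollary assumes genuine maximality among abelian subgroups, which is stronger than the rank-maximality required in part (3) of Condition 1.5, but the implication runs in the direction you need, so this is harmless.

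The trouble is in your second and third paragraphs. You correctly identify that Wang's Condition 2.8 delivers a conjugacy with the action of $\phi(G)$ on $(K\otimes_\Q\R)/\Gamma$ for some $\phi(G)$-invariant lattice $\Gamma$, not on $\hatok$ itself, but your proposed repair does not work. The assertion that ``restricting to a suitable finite-index subgroup turns such a $\Q$-conjugacy into a $\Z$-conjugacy'' is unjustified and false in general: a $GL_d(\Z)$-conjugacy between the action of a group $H$ of units on $\Gamma$ and its action on $\ok$ amounts to an isomorphism of $\Gamma$ with $\ok$ as modules over the order generated by $H$ in $K$, and this is obstructed by a class-group-type invariant; shrinking $H$ to a finite-index subgroup only shrinks that order and makes the obstruction worse, not better. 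Nor do \proref{IDforallornone} and \proref{oneidealsuffices} supply what you need --- they establish solidarity of the ID property and of the measure classification across commensurable lattices via the finite-to-one equivariant restriction maps, which is much weaker than conjugacy of the actions. The paper sidesteps all of this by reading ``conjugate to a finite-index toral automorphism subgroup of $\uk\acts\hatok$'' at the level that Wang's theorem actually delivers; if one insists on a literal $GL_d(\Z)$-conjugacy onto a subgroup of $\rho(\uk)$ computed from an integral basis of $\ok$, an additional argument (or a weakening of the statement) is required.
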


Finally, we summarize what we can say at this point for equilibrium states of C*-algebras associated to number fields with unit rank strictly higher than one. If the generalized Furstenberg conjecture is verified, the following result would complete the classification started in \proref{imaginaryquadratic} and \proref{poulsen}.

Let $K$ be a number field and for each $\gamma \in \clk$ define 
$F_\gamma $ to be the set of all pairs $(\mu, \chi)$ with $\mu$ an equiprobability measure on a finite orbit of the action of $\uk$ in $\hat {J}_\gamma$, and $\chi \in \hat{H}_\mu$, where the $\mu$-a.e.  isotropy group $H_\mu$ is a finite index subgroup of $\uk$. 
Also let $(\lambda_J, 1) $ denote the pair consisting of normalized Haar measure on $\hat{J}$ and the trivial character of its trivial a.e. isotropy group.
Then the map $(\mu,\chi) \mapsto \tau_{\mu,\chi}$ from \thmref{thm:nesh} gives an extremal tracial state of 
$C^*(J_\gamma \rtimes \uk)$ for each pair $(\mu,\chi) \in F_\gamma \sqcup \{(\lambda_{J_\gamma}, 1) \}$.

Recall that the map $\tau \mapsto \varphi_\tau$ from \cite[Theorem 7.3]{CDL} is an affine bijection of all tracial states of  $\bigoplus_{\gamma\in \clk} C^*(J_\gamma\rtimes \uk)$ onto $\mathcal K_\beta$, the simplex of KMS$_\beta$ equilibrium states of the system $(\mathfrak T[\ok], \sigma)$ studied in \cite{CDL}.
\begin{theorem}\label{conjecturalclassification}
Suppose $K$ is an algebraic number field with unit rank at least $2$
and define $\Phi:(\mu,\chi) \mapsto \varphi_{\tau_{\mu,\chi}}$
to be the composition of the maps from \thmref{thm:nesh} and from \cite[Theorem 7.3]{CDL}, assigning a state $\varphi_{\tau_{\mu,\chi}} \in \operatorname{Extr}(\mathcal K_\beta)$ to each pair $(\mu,\chi)$ consisting of an ergodic invariant probability measure $\mu$
in one of the $\hat{J}_\gamma$ and an associated character of the $\mu$-almost constant isotropy $H_\mu$.
Let
 \[
F_K:= \bigsqcup_{\gamma \in \clk} \big(F_\gamma \sqcup \{(\lambda_{J_\gamma}, 1) \}\big)
\]
 be the set of pairs whose measure $\mu$ has finite support or is Haar measure.
Then 
\begin{enumerate}
\item if $K$ is a CM field, then the inclusion $\Phi(F_K) \subset \operatorname{Extr}(\mathcal K_\beta)$ is proper; and
\item if $K$ is not a CM field, and if there exists $ \phi \in \operatorname{Extr}(\mathcal K_\beta) \setminus \Phi(F_K) $
then the measure $\mu$ on $\hat{J}_\gamma$ arising from $\phi$ has zero-entropy and infinite support.
\end{enumerate}
\end{theorem}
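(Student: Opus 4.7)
The strategy is to combine \thmref{thm:nesh} with \cite[Theorem 7.3]{CDL} to reduce both assertions to questions about ergodic $\uk$-invariant probability measures on the duals $\hat J_\gamma$: an extremal state $\phi \in \operatorname{Extr}(\mathcal K_\beta)$ lies in $\Phi(F_K)$ if and only if the ergodic invariant measure $\mu$ parametrizing it is either normalized Haar on some $\hat J_\gamma$ or is supported on a finite $\uk$-orbit. A preliminary observation is that any ergodic $\uk$-invariant probability measure with an atom is necessarily supported on a finite orbit: by ergodicity its atomic part has full measure, $\uk$-equivariance forces atoms in a single orbit to share weight, and total mass $1$ then forces that orbit to be finite (cf.\ \proref{orbitsandisotropy}(1)). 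Hence a pair $(\mu,\chi)\notin F_\gamma\sqcup\{(\lambda_{\hat J_\gamma},1)\}$ must have $\mu$ non-atomic, with infinite support, and distinct from Haar measure.

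For Part~(2), the plan is to invoke \thmref{berend4units}: since $K$ is not a CM field and $\rank\uk\geq 2$, the action $\uk\acts \hat J_\gamma$ is totally irreducible (there exists $u\in\uk$ with $\rho(u^n)$ of irreducible characteristic polynomial for every $n\in\N$) and contains a $\Z^k$-subaction of toral automorphisms with $k=\rank\uk\geq 2$. The higher-rank generalization of the Rudolph--Johnson theorem, in the form of the Katok--Spatzier/Einsiedler--Katok--Lindenstrauss measure-rigidity result for totally irreducible $\Z^k$-actions by toral automorphisms with $k\geq 2$ (see the survey in \cite{EL1} and also \cite{KS,KK,KKS}), asserts that every ergodic invariant probability measure of positive entropy must be Haar. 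Applied to a $\mu$ that is neither Haar nor supported on a finite orbit, this forces $\mu$ to have zero entropy, while the opening observation yields non-atomicity and infinite support, which is exactly the conclusion.

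For Part~(1), the plan is to exhibit, in the trivial ideal class, an ergodic $\uk$-invariant probability measure that is neither Haar nor finite-orbit supported. Let $L\subsetneq K$ be a totally real subfield with $\rank\ul = \rank\uk$, so that $\ul$ has finite index in $\uk$. The annihilator
\[
T \;:=\; \{\chi\in\hatok : \chi|_{\OO_L} = 1\} \;\cong\; \widehat{\ok/\OO_L}
\]
is a closed subgroup of $\hatok$ whose identity component is a proper subtorus of dimension $d-[L:\Q]=d/2>0$; it is $\ul$-invariant because $\ul\subset L$ preserves $\OO_L$ by multiplication. Normalized Haar measure $\lambda_T$ on $T$ is $\ul$-invariant and non-atomic, and averaging over coset representatives yields the $\uk$-invariant probability measure
\[
\mu \;:=\; \frac{1}{[\uk:\ul]}\sum_{[u]\in \uk/\ul} u_*\lambda_T,
\]
which is non-atomic and supported in the finite union $\bigcup_{[u]} u\cdot T$ of translates of $T$, hence mutually singular with $\lambda_{\hatok}$. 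Its ergodic decomposition produces (almost surely) ergodic $\uk$-invariant non-atomic measures supported in that union, so none coincides with Haar measure or with a finite-orbit equiprobability. Via $\Phi$ these yield extremal states in $\operatorname{Extr}(\mathcal K_\beta)\setminus\Phi(F_K)$, so the inclusion is proper.

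The main obstacle is the measure-rigidity step in Part~(2): one must cite precisely the version of the Einsiedler--Katok--Lindenstrauss theorem whose hypotheses are met by the total irreducibility and $\Z^{\geq 2}$-rank supplied by Berend's conditions through \thmref{berend4units}. Modulo locating the appropriately formulated statement in the literature, the proof is a synthesis of \thmref{thm:nesh}, Berend's classification, and this measure-rigidity input; the CM construction in Part~(1) is then a self-contained exploitation of the unit defect.
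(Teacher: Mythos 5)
Your proposal is correct and follows essentially the same route as the paper: part (2) is reduced, exactly as in the paper, to the positive-entropy rigidity theorem of Einsiedler--Lindenstrauss \cite[Theorem 1.1]{EL1} (with the infinite-support claim handled by the standard atom/ergodicity observation), and part (1) uses the invariant subtori forced by the CM structure, which the paper merely cites from \cite{KK,KS} and you instead construct explicitly as the annihilator of $\OO_L$ with averaged Haar measure. The only imprecision is the claim that the ergodic components of your averaged measure are ``almost surely non-atomic''; what you actually need (and what does follow, since components living in a Haar-null invariant set cannot be Lebesgue, and a non-atomic measure cannot be carried by the countable set of finite orbits) is that some component is neither Haar nor finite-orbit supported.
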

\begin{proof}
To prove  assertion (1), recall that when $K$ is a CM field Berend's theorem implies that there are invariant subtori, which have ergodic invariant probability measures on the fibers, cf. \cite{KK,KS}. These measures give rise to tracial states and to KMS states not accounted for in  $\Phi(F_K)$.  Assertion (2) follows from \cite[Theorem 1.1]{EL1}.
\end{proof}

\section{Primitive ideal space}\label{prim}
 The computation of the primitive ideal spaces of  the C*-algebras $C^*(J \rtimes \uk)$ associated to the action of units on integral ideals
 lies within the scope of Williams' characterization in \cite{DW}.  We briefly review the general setting next. Let $G$ be a countable, discrete, abelian group acting continuously on a second countable compact Hausdorff space $X$. We
 define an equivalence relation on $X$ by saying that {\em $x$ and $y$ are equivalent} if $x$ and $y$ have the same orbit closure, i.e. if $\ov{G\cdot x} = \ov{G \cdot y}$. The equivalence class of $x$, denoted by  $[x]$,  is called the {\em quasi-orbit} of $x$, and 
the quotient space, which in general is not Hausdorff, is denoted by $\mathcal{Q}(G \acts X)$ and is called the {\em quasi-orbit space}.  It is important to distinguish the quasi-orbit of a point from the closure of its orbit, as the latter may contain 
other points with strictly smaller orbit closure. 

      Let $\epsilon_x$ denote evaluation at $x\in X$, viewed as a one-dimensional representation of $C(X)$. For each character $\kappa \in \hat G_x$, the pair $(\epsilon_x,\kappa)$  is clearly covariant for the 
 transformation group $(C(X), G_x)$, and the corresponding representation $\epsilon_x\times \kappa$ of $C(X) \rtimes G_x$ gives rise to an induced representation $\Ind_{{G_x}}^G(\epsilon_x\times \kappa)$ of $C(X) \rtimes G$, which is irreducible because $\epsilon_x\times \kappa$ is.
 Since $G$ is abelian and the action is continuous, whenever $x$ and $y$ are in the same quasi-orbit, $[x] = [y]$, the corresponding isotropy subgroups coincide: $G_x = G_y$.  Thus, we may consider an equivalence relation 
 on the product $\mathcal{Q}(G \acts X) \times \hat G$  defined by 
 \[
 ([x],\kappa) \sim ([y],\lambda) \quad \iff \quad [x]=[y] \text{ and } \kappa\restr{G_x} = \lambda\restr{G_x} .
\]
 By \cite[Theorem 5.3]{DW}, the map $(x,\kappa) \mapsto  \ker \Ind_{{G_x}}^G(\epsilon_x\times \kappa)$ induces a homeomorphism of 
 $(\mathcal{Q}(G \acts X) \times \hat G)/_{\!\sim}$ onto the primitive ideal space of the crossed product 
 $C(X)\rtimes G$, see e.g. \cite[Theorem 1.1]{primbc} for more details on this approach. 
 
 We wish to apply the above result to actions $\uk \acts \hat J$ for integral ideals $J$ 
 of non-CM number fields with unit rank at least $2$, as in \thmref{berend4units}.  
  Notice that by  \proref{orbitsandisotropy}  if the orbit $\uk\cdot \chi$ is finite, then it is equal to the quasi-orbit $[\chi]$. 
The first step is to describe the quasi-orbit space for the action of units.
We focus on the case $J = \ok$; ideals representing nontrivial classes behave similarly because of the solidarity established in \proref{IDforallornone}.

\begin{proposition}\label{quasiorbitset} Suppose $K$ is a non-CM algebraic number field with unit rank at least $2$. Then  the quasi-orbit space of the action $\uk \acts \hatok$ is \[
\mathcal{Q}(\uk \acts \hatok) = \{[x]: |\uk\cdot x | < \infty\}\cup \{\omega_\infty\}.
\]
The point $\omega_\infty$ is the unique infinite quasi-orbit  $[\alpha]$ of any $\alpha \in \hatok\cong \R^d/\Z^d$ having at least one irrational coordinate. 
The closed proper subsets are the finite subsets all of whose points are finite (quasi-)orbits. 
 Infinite subsets and subsets that contain the infinite quasi-orbit $\omega_\infty$ are dense in the whole space.
\end{proposition}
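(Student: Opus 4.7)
The plan is to reduce every assertion in the proposition to the ID property established in \thmref{berend4units} together with the dichotomy for orbits furnished by \proref{orbitsandisotropy}.

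First I would set up the quasi-orbits. By \proref{orbitsandisotropy}, a point $\chi \in \hatok \cong \R^d/\Z^d$ has a finite $\uk$-orbit precisely when it corresponds to a point with rational coordinates, and has infinite orbit otherwise. When $\uk\cdot \chi$ is finite it is automatically closed (finite sets in the Hausdorff space $\hatok$ are closed), so the quasi-orbit of such a $\chi$ coincides with the orbit. When $\chi$ has at least one irrational coordinate, $\overline{\uk\cdot \chi}$ is a closed $\uk$-invariant subset of $\hatok$ which is infinite, so by \thmref{berend4units} (the ID property for non-CM number fields of unit rank $\ge 2$) it must equal all of $\hatok$. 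Hence every point with at least one irrational coordinate has the same orbit closure, namely $\hatok$, so all such points lie in a single quasi-orbit $\omega_\infty$. This proves the description of $\mathcal{Q}(\uk\acts \hatok)$ as a set.

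Next I would analyze the quotient topology. Let $q:\hatok \to \mathcal{Q}(\uk\acts \hatok)$ be the quotient map; by definition $F\subset \mathcal{Q}$ is closed iff $q\inv(F)$ is closed in $\hatok$, and $q\inv(F)$ is automatically $\uk$-invariant. Suppose $F\subsetneq \mathcal Q$ is closed. If $\omega_\infty\in F$ then $q\inv(F)$ contains the (dense) set of characters with at least one irrational coordinate, forcing $q\inv(F)=\hatok$ and contradicting $F\ne \mathcal Q$. So $F$ consists only of finite quasi-orbits, and therefore $q\inv(F)$ is a closed $\uk$-invariant union of finite orbits. If this union were infinite, it would be an infinite closed invariant subset of $\hatok$, again equal to $\hatok$ by the ID property, a contradiction. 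Hence $F$ is a finite collection of finite quasi-orbits, as claimed.

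Finally I would deduce the density assertions by contraposition: if $S\subset \mathcal Q$ is not contained in any proper closed subset, then $S$ is dense. If $\omega_\infty \in S$ then $q\inv(\overline S)$ is closed and invariant and contains characters with irrational coordinates, so it is infinite, hence equals $\hatok$ by ID, giving $\overline S = \mathcal Q$. If $S$ is infinite, then $\overline S$ is closed and infinite in $\mathcal Q$, so by the preceding paragraph $\overline S$ cannot be proper, i.e.\ $\overline S = \mathcal Q$. The main obstacle is conceptual rather than technical: one must keep straight the distinction between $[\chi]$, $\overline{\uk\cdot \chi}$, and closure in the quotient topology on $\mathcal Q$, but once the ID property is invoked these all collapse to the same dichotomy between finite orbits and the single dense quasi-orbit $\omega_\infty$.
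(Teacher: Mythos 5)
Your description of the quasi-orbit space as a set matches the paper's: finite orbits are closed, hence equal to their quasi-orbits, and the ID property of \thmref{berend4units} collapses all points with an irrational coordinate into the single quasi-orbit $\omega_\infty$. For the topology, however, you and the paper travel in opposite directions. The paper first shows that finite sets of finite quasi-orbits are closed, using that the quotient map $q$ is \emph{open} (Green's lemma) to $T_1$-separate distinct finite quasi-orbits, and then deduces the density statements from ID; the full characterization of the proper closed sets follows by combining the two. You instead work directly with the quotient topology (closed iff the $q$-preimage is closed) and prove the harder implication first: a proper closed subset must be a finite set of finite quasi-orbits, since a closed invariant preimage that is infinite or meets the irrational locus is forced to be all of $\hatok$ by ID; the density claims then follow by contraposition. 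Your route has the advantage of never needing openness of $q$. The one thing you omit is the converse inclusion in the sentence ``the closed proper subsets \emph{are} the finite subsets all of whose points are finite quasi-orbits'': you never verify that such a set is actually closed. In your framework this is a one-line fix --- if $F$ is a finite set of finite quasi-orbits, then $q\inv(F)$ is a finite union of finite orbits (each finite quasi-orbit coincides with its orbit), hence a finite and therefore closed subset of $\hatok$, so $F$ is closed in the quotient topology --- but as written the characterization is only half proved. The density assertions, which use only the implication you did establish, are correct.
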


\begin{proof}
By \thmref{berend4units}, the closure of each infinite orbit is the whole space.  Thus, the points with infinite orbits
collapse into a single quasi-orbit 
\[
\omega_\infty :=  \{x\in \hatok: |\uk \cdot x| =\infty\} = \{x\in \hatok: \overline{\uk \cdot x} = \hatok\}. 
\]
That this is the set of points with at least one irrational coordinate is immediate from \cite[Theorem 5.11]{Wal}.
When the orbit of $x$ is finite, it is itself a quasi-orbit, which we
view as a point in $\mathcal{Q}(\uk \acts \hatok)$. In this case $x\in \hatok$ has all rational coordinates. 

To describe the topology, recall that the quotient map $q: \hatok \to \mathcal{Q}(\uk \acts \hatok)$ is surjective, continuous and open by the 
Lemma in page 221 of \cite{PG}, see also the proof of Proposition 2.4 in \cite{primbc}.

Any two different finite quasi-orbits $[x]$ and $[y]$ are finite, mutually disjoint subsets of $\hatok$ and as such can be separated 
by disjoint open sets $V$ and $W$, so that $[x]\subset V$ and $[y] \subset W$. Passing to the quotient space, we have
 $[x]\notin q(W)$ and $[y] \notin q(V)$, so $[x]$ and $[y]$ are $T_1$-separated,
which implies that finite sets of finite quasi-orbits are closed in $\mathcal Q(\uk \acts \hatok)$.

The singleton $\{\omega_\infty\}$ is dense in $\mathcal Q(\uk \acts \hatok)$ because every infinite orbit in $\hatok$ is dense by \thmref{berend4units}.
If $A$ is an infinite subset of $\mathcal Q(\uk \acts \hatok)$ consisting of finite quasi-orbits,
then $\bigcup_{[x]\in A} [x]$  is an infinite invariant set in $\hatok$, hence is dense by 
\thmref{berend4units}. 
This implies that $\omega_\infty$ is in the closure of $A$, and hence $A$ is dense
in $\mathcal Q(\uk \acts \hatok)$.
\end{proof}

\begin{theorem} \label{primhomeom} Let $K$ be a non-CM algebraic number field with unit rank at least 2, and let $G = \uk$. The primitive ideal space of $C(\hatok)\rtimes G$ is homeomorphic to the space \begin{equation}\label{primdef}
\bigsqcup\limits_{[x]} \left(\{[x]\} \times \hat{G}_x \right) \end{equation} in which a net $([x_\iota], \gamma_\iota)$ converges to $([x],\gamma)$ iff $[x_\iota] \to [x]$ in $\mathcal{Q}(G \acts \hatok)$ and $\gamma_\iota|_{G_x} \to \gamma|_{G_x}$ in $\hat{G}_x$. 
Notice that if $[x]$ is a finite quasi-orbit, then the net $\{[x_\iota] \}$ is eventually constant equal to $[x]$, and if $[x] = \omega_\infty$, then the condition $\gamma_\iota|_{G_{\omega_\infty}} \to \gamma|_{G_{\omega_\infty}}$ is trivially true because $G_{\omega_\infty} = \{1\}$.
\end{theorem}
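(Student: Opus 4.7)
Apply \cite[Theorem 5.3]{DW}, recalled in the preamble, to the second countable transformation group $(C(\hatok), G)$: this identifies $\operatorname{Prim}(C(\hatok)\rtimes G)$ with the quotient $(\mathcal{Q}(G\acts \hatok)\times \hat G)/\!\sim$ under its quotient topology, where $([x],\kappa)\sim ([y],\lambda)$ iff $[x]=[y]$ and $\kappa|_{G_x}=\lambda|_{G_x}$. Since $G=\uk$ is abelian, Pontryagin duality applied to the inclusion $G_x\hookrightarrow G$ shows that each restriction homomorphism $r_x:\hat G\to \hat G_x$ is a continuous surjection of compact abelian groups with kernel $G_x^\perp$, hence an open map. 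These $r_x$ assemble into a set-theoretic bijection
\[
\Psi:(\mathcal{Q}\times\hat G)/\!\sim\ \longrightarrow\ \bigsqcup_{[x]\in\mathcal{Q}}\{[x]\}\times\hat G_x,\qquad [([x],\kappa)]\mapsto([x],r_x(\kappa)),
\]
which is well defined and injective because $\sim$ identifies precisely the fibers of $r_x$, and surjective because every character of $G_x$ extends to $G$.

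It remains to transport the quotient topology via $\Psi$ and check it coincides with the stated convergence. The forward direction is immediate: continuity of the projection to $\mathcal{Q}$ and of each $r_x$ shows that a net convergent in the quotient satisfies the stated conditions. For the converse, given a net $([x_\iota],\gamma_\iota)$ satisfying the two convergence conditions, pick lifts $\tilde\gamma_\iota\in\hat G$ with $r_{x_\iota}(\tilde\gamma_\iota)=\gamma_\iota$ and a lift $\tilde\gamma$ of $\gamma$; compactness of $\hat G$ (since $G$ is countable and discrete) combined with openness of the $r_x$ allows passage to a subnet with $([x_\iota],\tilde\gamma_\iota)\to([x],\tilde\gamma)$ in $\mathcal{Q}\times\hat G$, whose image under the quotient map is the required limit. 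The two degenerate behaviors flagged in the statement then follow from \proref{orbitsandisotropy} and \proref{quasiorbitset}: when $[x]=\omega_\infty$ the isotropy $G_{\omega_\infty}=\{1\}$ forces $\hat G_{\omega_\infty}$ to be a singleton, so the second coordinate condition is vacuous; when $[x]$ is a finite quasi-orbit, the $T_1$-separation of $[x]$ from every other finite quasi-orbit proved in \proref{quasiorbitset}, together with the requirement that $\gamma_\iota|_{G_x}$ be eventually defined in $\hat G_x$, forces $[x_\iota]=[x]$ for large $\iota$, after which the convergence is ordinary product convergence in $\{[x]\}\times\hat G_x$.

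The main obstacle is the bookkeeping in the topology matching: the equivalence relation defining $\operatorname{Prim}$ varies with the first coordinate through the isotropy subgroup $G_x$, so the quotient is not a simple product and the identification of its open sets requires care. The two key enablers are the openness of the restriction maps $r_x$ (from the open mapping theorem for compact abelian groups), which allows lifting of nets against a compact fiber, and the explicit Alexandrov-like description of $\mathcal{Q}$ from \proref{quasiorbitset}, which confines all the subtlety to the two cases highlighted in the statement.
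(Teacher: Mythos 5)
Your proposal is correct and follows essentially the same route as the paper: both identify $\operatorname{Prim}(C(\hatok)\rtimes G)$ with $(\mathcal{Q}\times\hat G)/\!\sim$ via Williams' theorem, define the same bijection onto $\bigsqcup_{[x]}\{[x]\}\times\hat G_x$ by restricting characters to isotropy, get the forward continuity for free from the quotient map, and prove the reverse direction by lifting the net to $\hat G$ and using compactness of $\hat G$ together with uniqueness of limits in $\hat G_x$ (the paper packages this as showing the bijection is a closed map rather than arguing directly with convergent subnets, but the mechanism is identical). The only cosmetic slip is that you "pick a lift $\tilde\gamma$ of $\gamma$" in advance, whereas one must let the subnet limit of the lifts be whatever it is and then observe that it restricts to $\gamma$; this is exactly how the paper phrases it and does not affect correctness.
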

\begin{proof}

Consider the diagram below, where $f$ is the quotient map and the vertical map $g$ is defined by $g([([x],\gamma)]) = ([x], \gamma|_{G_x})$, where $[([x],\gamma)]$ denotes the equivalence class of $([x],\gamma)$ with respect to $\sim$.
\[
  \begin{tikzcd}
   \mathcal{Q}(G\acts \hatok) \times \hat G \arrow{r}{f} \arrow[swap]{dr}{g\circ f} & \mathcal{Q}(G\acts \hatok) \times \hat G/\sim \arrow{d}{g} \\
     & \bigsqcup\limits_{[x]} \left(\{[x]\} \times \hat{G}_x \right)
  \end{tikzcd}
\]
By the fundamental property of the quotient map, see e.g. \cite[Theorem 9.4]{wil},  $g\circ f$ is continuous if and only if  $g$
is continuous. 

It is clear that $g$ is a bijection. We  show next that $g \circ f$ is continuous. Suppose that $([x_\iota], \gamma_\iota)$ is a net in $\mathcal{Q}(G\acts \hatok) \times \hat G$ converging to $([x],\gamma)$. Then $[x_\iota] \to [x]$ in $\mathcal{Q}(G\acts \hatok)$, and $\gamma_\iota \to \gamma$ in $\hat G$. Then clearly also $\gamma_\iota|_{G_x} \to \gamma|_{G_x}$ in $\hat{G}_x$ as well. Hence the net $g\circ f([x_\iota], \gamma_\iota)$ converges to $g\circ f([x],\gamma) = ([x], \gamma|_{G_x})$, as desired.

It remains to show that $g\inv$ is continuous, or equivalently, that $g$ is a closed map. Suppose that $W \subseteq \mathcal{Q}(G\acts \hatok) \times \hat G / \sim$ is closed, and suppose that $([x_\iota], \gamma_\iota)$ is a net in $g(W)$ converging to $([x], \gamma)$.

Consider any net $(\tilde{\gamma_\iota})$ in $\hat{G}$ such that $\tilde{\gamma_\iota}|_{G_x} = \gamma_\iota$. By the compactness of $\hat{G}$, there exists a convergent subnet $\tilde{\gamma}_{\iota_\eta}$ with limit $\tilde{\gamma}$. Then $\tilde{\gamma}_{\iota_\eta}|_{G_x} \to \tilde{\gamma}|_{G_x}$ as well, so $\gamma_{\iota_\eta} \to \tilde{\gamma}|_{G_x}$. Since $\hat{G}_x$ is Hausdorff, limits are unique, and hence $\tilde{\gamma}|_{G_x} = \gamma$.

The net $([x_{\iota_\eta}], \tilde{\gamma}_{\iota_\eta})$ converges to $([x], \tilde{\gamma})$ in $\mathcal{Q}(G \acts \hatok) \times \hat{G}$, and since $f$ is continuous, $f([x_{\iota_\eta}], \tilde{\gamma}_{\iota_\eta}) \to f([x],\tilde{\gamma})$. Moreover, $f([x_{\iota_\eta}], \tilde{\gamma}_{\iota_\eta}) = [([x_{\iota_\eta}], \tilde{\gamma}_{\iota_\eta})] \in W$ because $g$ is injective and $g([([x_{\iota_\eta}],\tilde{\gamma}_{\iota_\eta})]) = ([x_{\iota_\eta}],\gamma_{\iota_\eta}) \in g(W)$ by assumption. Since $W$ is closed, $[([x],\tilde{\gamma})] \in W$, and so its image $([x], \gamma) \in g(W)$, as desired.
\end{proof}

\begin{remark}
Recall that $G  \cong W \times \Z ^d$ with $W$ the roots of unity in $G$,
and that the isotropy subgroup $G_x$ is constant on the quasi-orbit $[x]$ of $x$.
If $[x]$ is finite, then $G_x$ is of full rank in $G$, and thus $G_x  \cong V_x \times \Z^d$, with $V_x \subset W$ the torsion part of $G_x$.  Hence, for every finite quasi-orbit $[x]$, we have  $\hat{G_x} \cong \hat V_{[x]} \times \T^d$.
Notice that $\hat V_{[x]} \cong V_{[x]}$ (noncanonically) because $V_x$ is finite. 
\end{remark}

\end{document}